\documentclass{elsarticle}

\usepackage{lineno,hyperref}
\modulolinenumbers[1]

\usepackage{amssymb}

\journal{Finite Fields and Their Applications}

\newcommand{\Tr}{\textrm{\rm Tr}}
\newcommand{\N}{\textrm{\rm N}}
\newcommand{\ord}{\textrm{\rm ord}}
\newcommand{\qord}{\textrm{\rm qord}}

\newcommand{\residuo}[2]{(#1 \mbox{ mod } #2)}

\begin{document}

\def\bbbr{{\rm I\!R}} 
\def\bbbm{{\rm I\!M}}
\def\bbbn{{\rm I\!N}} 
\def\bbbf{{\rm I\!F}}
\def\bbbh{{\rm I\!H}}
\def\bbbk{{\rm I\!K}}
\def\bbbp{{\rm I\!P}}
\def\bbbz{{\mathchoice {\hbox{$\sf\textstyle Z\kern-0.4em Z$}}
{\hbox{$\sf\textstyle Z\kern-0.4em Z$}}
{\hbox{$\sf\scriptstyle Z\kern-0.3em Z$}}
{\hbox{$\sf\scriptscriptstyle Z\kern-0.2em Z$}}}}

\newtheorem{definition}{Definition}
\newtheorem{proposition}{Proposition}
\newtheorem{theorem}{Theorem}
\newtheorem{lemma}{Lemma}
\newtheorem{corollary}{Corollary}
\newtheorem{remark}{Remark}
\newtheorem{example}{Example}
\newtheorem{acknowledgments}{Acknowledgments}

\newenvironment{proof}{\begin{trivlist}\item[]{\em Proof: }}%
{\samepage \hfill{\hbox{\rlap{$\sqcap$}$\sqcup$}}\end{trivlist}}

\begin{frontmatter}

\title{Necessary and sufficient conditions on the order of a finite field $\bbbf_q$ for the easy identification of primitive polynomials of degree 2\tnoteref{mytitlenote}}
\tnotetext[mytitlenote]{Manuscript partially supported by PAPIIT-UNAM IN116626.}


\author{Gerardo Vega}

\address{Direcci\'on General de C\'omputo y de Tecnolog\'{\i}as de Informaci\'on y Comunicaci\'on, Uni\-ver\-si\-dad Nacional Aut\'onoma de M\'exico, 04510 Ciudad de M\'exico, MEXICO \\
(e-mail: gerardov@unam.mx).}

\begin{abstract} 
We present the necessary and sufficient conditions on the order $q$ of a finite field $\bbbf_q$ such that every irreducible polynomial of the form $x^2+bx+c \in \bbbf_q[x]$, with $b\neq 0$ and $c$ a primitive element of $\bbbf_{q}$, is a primitive polynomial. As a by-product of this result, we also present a new infinite family of finite fields $\bbbf_q$ for which it is easy, in a different way, to determine when an irreducible polynomial of degree two is primitive.
     
\end{abstract}

\begin{keyword}
Primitive polynomials; Order of a polynomial; Cyclotomic classes.
\end{keyword}

\end{frontmatter}


\section{Introduction}\label{secuno}

Let $q$ be a power of a prime number and $\bbbf_q$ a finite field with $q$ elements. In Finite Field Theory, a primitive polynomial is an irreducible polynomial of degree $m$ over $\bbbf_q$ that has the property that any of its roots is a primitive element of the finite field extension $\bbbf_{q^m}$. Hence, from a mathematical point of view, primitive polynomials and their roots have been of great interest in research areas such as Number Theory, Combinatorics, and Algebraic Geometry. Primitive polynomials are also widely used in various applications of finite fields such as Coding Theory and Cryptography. For example, primitive polynomials can be used to create pseudo-random number generators of huge periods, which is of great importance for cryptographic applications. For a better understanding of primitive polynomials, the reference list in \cite{Vega} is useful.

It is quite easy to determine when a monic polynomial of degree two with coefficients in the field of real numbers is irreducible over that field. More specifically, if $f(x)=x^2+bx+c \in \bbbr[x]$, then it is well-known that:

$$f(x)\; \mbox{ is irreducible over $\bbbr$}\; \Longleftrightarrow \;b^2-4c<0\;,\;\mbox{ that is }\;\sqrt{\;b^2-4c\;}\not\in \bbbr\;.$$ 

\noindent
Now suppose that $f(x)=x^2+bx+c \in \bbbf_q[x]$ is irreducible. Thus, in this case, the following question arises naturally: Is there a similar test --again in terms of the two coefficients $b$ and $c$-- to determine whether $f(x)$ is a primitive polynomial in $\bbbf_q[x]$? That is

$$f(x)\; \mbox{ is primitive over $\bbbf_q$}\; \Longleftrightarrow \;b\;\mbox{ and }\; c\; \mbox{ satisfy some test to be determined?}$$

\noindent
Despite the simplicity of this question, there is no general answer. However, some infinite families of finite fields $\bbbf_q$ for which it is easy to determine when an irreducible polynomial of degree two is primitive, were recently identified in \cite{Vega}. More specifically, sufficient conditions were identified on the order $q$ of the finite field $\bbbf_q$ so that it is easy to determine in that field when an irreducible polynomial of degree two is primitive:

\begin{theorem}\label{teoceroA}
\cite[Theorem 4]{Vega} Let $q$ be a power of a prime number. If $q+1$ is either of the form $q+1=2^t$, $q+1=\pi$, or $q+1=2 \pi$, for some positive integer $t$ and some prime $\pi>2$, then any irreducible polynomial of the form $f(x)=x^2+bx+c \in \bbbf_{q}[x]$, with $b\neq 0$ and $c$ a primitive element of $\bbbf_q$, is a primitive polynomial.
\end{theorem}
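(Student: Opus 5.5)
Let $\alpha\in\bbbf_{q^2}$ be a root of $f(x)=x^2+bx+c$, which is irreducible, so its other root is $\alpha^q$. Then $c=\alpha\alpha^q=\alpha^{q+1}=\N(\alpha)$ and $-b=\alpha+\alpha^q=\Tr(\alpha)$. Write $n=\ord(\alpha)$, a divisor of $q^2-1=(q-1)(q+1)$. We must show $n=q^2-1$. The plan is to use the two hypotheses, on $c$ and on $b$, to control the $(q-1)$-part and the $(q+1)$-part of $n$ respectively.

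\emph{Step 1: the hypothesis on $c$.} Since $c=\alpha^{q+1}$ is primitive in $\bbbf_q$, it has order $q-1$. Hence $\ord(\alpha^{q+1})=n/\gcd(n,q+1)=q-1$. Equivalently, $\alpha^{q+1}$ generates $\bbbf_q^*$, so the subgroup $\langle\alpha\rangle$ surjects onto $\bbbf_q^*$ under the norm map. Since $\ker \N$ is the subgroup $\mu_{q+1}$ of order $q+1$, this gives $n=(q-1)\cdot d$, where $d=|\langle\alpha\rangle\cap\mu_{q+1}|$. It therefore remains to show $d=q+1$.

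\emph{Step 2: the hypothesis on $b$.} We first identify the element in the kernel that we want to control. Consider $\beta=\alpha^{q-1}=\alpha^q/\alpha\in\mu_{q+1}$. Its powers lie in $\langle\alpha\rangle\cap\mu_{q+1}$. Also $b\neq 0$ means $\alpha^q\neq-\alpha$, i.e. $\beta\neq -1$. Since $\alpha\notin\bbbf_q$, we also have $\beta\neq 1$. So $\beta\in\mu_{q+1}\setminus\{\pm1\}$, and thus $\ord(\beta)$ is a divisor of $q+1$ different from $1$ and $2$ (for $q$ odd; for $q$ even, $-1=1$ and the condition $b\neq0$ is exactly what rules out $\beta=1$).

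\emph{Step 3: case analysis on $q+1$.} If $q+1=\pi$ is an odd prime, every divisor other than $1$ is $\pi$, so $\ord(\beta)=q+1$. If $q+1=2\pi$, the divisors of $q+1$ other than $1,2$ are $\pi$ and $2\pi$. If $q+1=2^t$, the divisors other than $1,2$ are the powers $2^s$ with $s\ge 2$. In the first case we are done immediately: $\langle\beta\rangle=\mu_{q+1}\subseteq\langle\alpha\rangle$, so $d=q+1$.

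\emph{Step 4: the remaining cases (main obstacle).} In the other two cases, $\beta$ alone need not generate $\mu_{q+1}$, and the plan is to combine it with Step 1. Write $q^2-1=(q-1)(q+1)$ and compare the $\ell$-adic valuations of $n$ for each prime $\ell$.

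For $\ell$ odd dividing $q+1$ (only $\ell=\pi$ arises), we have $\gcd(q-1,q+1)\mid 2$, so $\pi\nmid q-1$. Since $\pi\mid\ord(\beta)$ and $\ord(\beta)\mid n$, we get $\pi\mid n$, which gives full $\pi$-part. For $\ell\mid q-1$, Step 1 already gives the full part.

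The real difficulty is the prime $2$ when $q$ is odd, since it divides both $q-1$ and $q+1$. For $q+1=2\pi$ we have $q\equiv1\pmod 4$, so the $2$-part of $q^2-1$ is $2\cdot v$, where $v$ is the $2$-part of $q-1$. For $q+1=2^t$ the $2$-part is $2^t\cdot 2$. To handle this I would use Step 1 in the form $n/\gcd(n,q+1)=q-1$: the $2$-adic valuation of $n$ minus that of $\gcd(n,q+1)$ must equal $v_2(q-1)$.

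\begin{itemize}
\item In the case $q+1=2\pi$: if $2\nmid n/(q-1)$, then $v_2(n)=v_2(q-1)$, and we need $v_2(\gcd(n,q+1))=0$. But $2\mid q-1\mid n$ and $2\mid q+1$, so $v_2(\gcd(n,q+1))\ge1$, a contradiction.
\item In the case $q+1=2^t$ ($t\ge2$, so $v_2(q-1)=1$): $v_2(n)-\min(v_2(n),t)=1$ forces $v_2(n)=t+1$.
\end{itemize}

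Hence $n=q^2-1$ in all cases. (For even $q$, $\gcd(q-1,q+1)=1$, and Steps 1–3 suffice directly, since only the odd prime $\pi$, or $q+1$ itself, must be checked.) I expect this $2$-adic bookkeeping to be the only delicate point. Note that it may even make Step 2 unnecessary for the $2^t$ case, so I would check carefully where $b\neq0$ is genuinely needed.
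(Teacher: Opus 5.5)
Your proof is correct, and it takes a genuinely different route from the paper.

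\textbf{The paper's route.} The paper does not reprove this statement; it quotes it from \cite{Vega}. Within the paper it is recovered only by counting. Proposition~\ref{prodos} reduces everything to the normalized family $x^2+\alpha^i x+\alpha$, where $I_\delta\subseteq B_\delta$. Proposition~\ref{protres} then computes $|B_\delta|$ from the quadratic factors of $x^{q+1}-\alpha$, and $|I_\delta|=\phi(q+1)/2$ from the total of $\phi(q^2-1)/2$ primitive quadratics. Theorem~\ref{teouno} checks that these two counts coincide exactly when $q+1\in\{\pi,2\pi,2^t\}$.

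\textbf{Your route.} You work directly with a root. The norm condition, in the form $n/\gcd(n,q+1)=q-1$, puts the full $\ell$-part of $q^2-1$ into $n$ for every prime $\ell\mid q-1$, including $\ell=2$ when $q$ is odd. So only the odd primes of $q+1$ remain, and these are caught by $\beta=\alpha^{q-1}\neq\pm1$.

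\textbf{What each approach buys.} Your argument is self-contained: no cyclotomic classes, no count of primitive polynomials, and no appeal to the quasi-order results of \cite{Vega} or \cite[Theorem 3.18]{Lidl}. It also settles your closing question about where $b\neq0$ is genuinely used.
\begin{itemize}
\item For $q+1=2\pi$ it is needed: here $q\equiv1\pmod 4$, so $x^2+c$ is irreducible, and its root has $\beta=-1$, so it is not primitive.
\item For $q+1=\pi$, $b\neq0$ already follows from irreducibility in characteristic~$2$.
\item For $q+1=2^t$, the norm condition alone gives $n=q^2-1$.
\end{itemize}
Your approach also explains where the three families come from. Your $\beta$ has order exactly $\qord(f)$, since $\alpha^\rho\in\bbbf_q$ iff $\beta^\rho=1$. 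Once the norm condition has fixed the $2$-part, excluding $\ord(\beta)\in\{1,2\}$ forces $\ord(\beta)=q+1$ exactly when the odd part of $q+1$ is $1$, or is a prime with $4\nmid q+1$. For $q+1=4\pi$ the value $\ord(\beta)=4$ survives, which is precisely the exceptional case $b^2=2c$ of Theorem~\ref{teodiez}. What the counting route buys in return is necessity at no extra cost: equality of cardinalities yields the full equivalence of Theorem~\ref{MiCar1}, whereas your argument gives only sufficiency.

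\textbf{Two cosmetic points.} Your $\alpha$ clashes with the paper's fixed primitive element $\alpha=\gamma^{q+1}$ of $\bbbf_q$. Also, ``for $\ell\mid q-1$, Step~1 already gives the full part'' should be restricted to odd $\ell$; you do handle $\ell=2$ separately.
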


Basically, what the previous theorem tells us is that there exist at least three infinite families of finite fields for which, given an irreducible polynomial of the form $f(x)=x^2+bx+c$, it is very easy to determine whether it is primitive simply by looking at its two coefficients $b$ and $c$. For example, let $q=13$ and note that $q+1=2(7)$. Note also that the only primitive elements of the finite field $\bbbf_{13}$ are the four elements $2$, $6$, $7$, and $11$. Furthermore, it is not difficult to verify that the three polynomials $x^2+2$, $x^2+5x+1$, and $x^2+2x+7$ are irreducible over $\bbbf_{13}$; however, by Theorem \ref{teoceroA}, only the last one is primitive.

The first finite fields $\bbbf_{q}$ that satisfy the sufficient conditions in Theorem \ref{teoceroA} are shown in \cite[Table I]{Vega}. As an erratum, note that the finite field $\bbbf_{61}$ was, by mistake, excluded from that table ($q+1=62=2(31)$).

Now, an interesting question is: Are the sufficient conditions in Theorem \ref{teoceroA} also necessary? In other words, does there exist another finite field $\bbbf_q$ such that, for every irreducible polynomial of the form $f(x)=x^2+bx+c$, it is primitive if and only if $b\neq 0$ and $c$ is a primitive element of the finite field? The main objective of this work is to demonstrate that the sufficient conditions of Theorem \ref{teoceroA} are indeed also necessary. That is, our main objective is to formally show that, beyond the three infinite families of finite fields from Theorem \ref{teoceroA}, there is no other finite field $\bbbf_q$ such that, given any irreducible polynomial of the form $f(x)=x^2+bx+c$, it will be primitive if and only if $b\neq 0$ and $c$ a primitive element of that finite field. In mathematics, and in many other areas of science, it is generally more difficult to prove that something does not exist than to prove that it does exist. As a byproduct of the proof of this result, we present another infinite family of finite fields $\bbbf_q$ for which, using a different test involving the two coefficients $b$ and $c$, it is easy to determine when an irreducible polynomial of degree two is primitive.

This paper is structured as follows: In Section \ref{secdos}, we establish notation and recall key definitions. To ensure the paper is relatively self-contained, we also recall some results from \cite{Vega}. Section \ref{sectres} is devoted to presenting some preliminary results. In particular, we present an interesting connection between the order and the quasi-order (see definitions below) of an irreducible polynomial of any positive degree. In Section \ref{seccuatro}, fixing a primitive element $\alpha$ of $\bbbf_q$, we obtain the exact number of irreducible and primitive polynomials of the form $x^2+\alpha^i x+\alpha$, with $0\leq i<q-1$. This result is used in Section \ref{seccinco} to prove that the sufficient conditions of Theorem \ref{teoceroA} are also necessary, while in Section \ref{secseis} we present a new infinite family of finite fields in which it is also easy to determine when an irreducible polynomial is primitive. Finally, Section \ref{secsiete} will be devoted to the conclusions.

\section{Notation, definitions, and already-known results}\label{secdos}

First of all, we establish, for this section and for the rest of this paper, the following:

\medskip
\noindent
{\bf Notation.} For positive integers $i$, $m$ and $z$ with $m>1$ and $0\leq z<m$, when we write $z=\residuo{i}{m}$, rather than $z \equiv i \!\pmod{m}$, we mean that $z$ equals the remainder that results when $i$ is divided by $m$. By $q$ we denote a power of a prime number. We are going to fix $\delta=0$ if $q$ is even, and $\delta=1$ otherwise. From now on, $\gamma$ will denote a primitive element of $\bbbf_{q^2}$ and we fix $\alpha=\gamma^{q+1}$ as a primitive element of $\bbbf_{q}$. For any integer $0 \leq i < q-1$, we define ${\cal C}_i^{(q-1,q^2)}:=\gamma^i \langle \gamma^{q-1} \rangle$, where $\langle \gamma^{q-1} \rangle$ denotes the subgroup of $\bbbf_{q^2}^*$ generated by $\gamma^{q-1}$. The $q-1$ cosets, ${\cal C}_i^{(q-1,q^2)}$, are called the {\em cyclotomic classes} of order $q-1$ in $\bbbf_{q^2}$. 

By using ``$\Tr_{\bbbf_{q^2}/\bbbf_q}$'' and ``$\N_{\bbbf_{q^2}/\bbbf_q}$'' we will denote, respectively, the {\em trace mapping} and the {\em norm mapping} from $\bbbf_{q^2}$ to $\bbbf_q$. Let $n>0$ be an integer and let $p(x)$ be a monic polynomial in $\bbbf_q[x]$; then by $NI_q(p(x),n)$ we will denote the number of monic irreducible polynomials of degree $n$ in $\bbbf_q[x]$ that divide $p(x)$.

The following three definitions are key to this work:

\begin{definition}\label{defPP}
\cite[Definition 3.15]{Lidl} A polynomial $f(x)\in \bbbf_{q}[x]$ of degree $m\geq 1$ is called a {\em primitive polynomial} over $\bbbf_{q}$ if it is the minimal polynomial over $\bbbf_{q}$ of a primitive element of $\bbbf_{q^m}$.
\end{definition}

\begin{definition}\label{defcero}
\cite[Theorem 3.2]{Lidl} Let $f(x) \in \bbbf_q [x]$ be a polynomial of positive degree with $f(0) \neq 0$. The least positive integer $e$ for which $f(x)$ divides $x^e-1$ is called the {\em order} of $f(x)$ and denoted by \ord$(f(x))$. 
\end{definition}

\begin{definition}\label{defuno}
Let $f(x) \in \bbbf_q [x]$ be a polynomial of positive degree with $f(0) \neq 0$. The least positive integer $\rho$ for which $x^{\rho}$ is congruent modulo $f(x)$, to some element $a \in \bbbf_q^*$, is called the {\em quasi-order} of $f(x)$ and denoted by \qord$(f(x))$. That is, $x^{\rho} \equiv a \pmod{f(x)}$ and $\rho=\mbox{\qord}(f(x))$. 
\end{definition}

Due to its importance for this work, we recall the following:  

\begin{remark}\label{remuno} 
\cite[Remark 1]{Vega} Let $f(x)$, $\rho$, and $a$ be as in Definition \ref{defuno} and let $e,y,r$ be positive integers such that $e=\rho y + r$, where $0\leq r < \rho$. If $r\neq 0$, then $x^r \equiv g(x) \pmod{f(x)}$ for some non-constant polynomial $g(x) \in \bbbf_q [x]$, and $x^{e}=x^{\rho y}x^r \equiv x^{\rho y}g(x) \equiv a^y g(x) \pmod{f(x)}$. Therefore, for a suitable positive integer $e$ and a suitable element $d\in \bbbf_q^*$, $x^e \equiv d \pmod{f(x)}$ iff $\rho | e$ and $d=a^{e/\rho}$.
\end{remark}

The following is a new characterization of all primitive polynomials of degree two:

\begin{theorem}\label{MiCar2}
\cite[Theorem 5]{Vega} Let $f(x)=x^2+bx+c \in \bbbf_{q}[x]$ be irreducible. In the case that none of the three sufficient conditions in Theorem \ref{teoceroA} are satisfied, let $p$ be the smallest odd prime that divides $q+1$ and let $h(x), r(x) \in \bbbf_q [x]$ be the uniquely determined polynomials such that $h(x)=\frac{x^{\frac{q+1}{p}+1}-r(x)}{f(x)}$, where $\deg(r(x))<2$. Then $f(x)$ is primitive iff $b\neq 0$, $c$ is a primitive element of $\bbbf_{q}$ and at least one of the following two conditions holds:

\begin{enumerate}
\item[{\rm (A)}] $q+1$ is either of the form $q+1=\pi$, $q+1=2^t$, or $q+1=2 \pi$, for some positive integer $t$ and some prime $\pi>2$.
\item[{\rm (B)}] All the $\frac{q+1}{p}$ terms of $h(x)$ are non-zero.
\end{enumerate}
\end{theorem}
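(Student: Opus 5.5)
The plan is to reduce primitivity to a statement about the quasi-order of $f(x)$, and then test the single remaining exponent $\frac{q+1}{p}$ by a polynomial division.

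\textbf{Step 1: basic facts about a root.} Let $\beta\in\bbbf_{q^2}$ be a root of the irreducible $f(x)$, so $\beta^{q+1}=\N_{\bbbf_{q^2}/\bbbf_q}(\beta)=c$. Since $x^\rho\equiv a\pmod{f(x)}$ with $a\in\bbbf_q^*$ holds iff $\beta^\rho\in\bbbf_q^*$, we get that $\qord(f(x))$ is the order of $\beta$ in the quotient group $\bbbf_{q^2}^*/\bbbf_q^*$. That quotient is cyclic of order $q+1$, so $\qord(f(x))$ divides $q+1$.

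\textbf{Step 2: primitivity in terms of quasi-order.} I would show that $f(x)$ is primitive iff $c$ is primitive in $\bbbf_q$ and $\qord(f(x))=q+1$.

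For the forward direction, suppose $\beta$ generates $\bbbf_{q^2}^*$. Then its image generates the quotient, so $\qord(f(x))=q+1$. Also $c=\beta^{q+1}$ has order $q-1$.

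For the converse, suppose $\ord(\beta)=e$. Then $\beta^e=1\in\bbbf_q^*$, so $(q+1)\mid e$ by Remark~\ref{remuno}. Hence $\ord(\beta)=(q+1)\,\ord(\beta^{q+1})=(q+1)\ord(c)=q^2-1$.

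Now compare with $b$. If $b=0$, then $\beta^2=-c\in\bbbf_q$, so $\qord(f(x))\le 2$. This is smaller than $q+1$ except when $q=1$, which is impossible, so $b=0$ forces non-primitivity. Hence the necessity of $b\neq0$ and of $c$ primitive is immediate, and the theorem reduces to this: under $b\ne0$ and $c$ primitive, $\qord(f(x))=q+1$ iff (A) or (B) holds.

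\textbf{Step 3: where the quasi-order can drop.} Since $\qord(f(x))\mid q+1$, it equals $q+1$ iff $\beta^{(q+1)/\ell}\notin\bbbf_q$ for every prime $\ell\mid q+1$. Write $[\beta]$ for the class of $\beta$ in $\bbbf_{q^2}^*/\bbbf_q^*$.

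The prime $\ell=2$ is handled by $b$. If $q$ is odd, then $c$ primitive means $c$ is a nonsquare. If $[\beta]$ were a square in the quotient, then $\beta=\lambda\mu^2$ with $\lambda\in\bbbf_q^*$ and $\mu\in\bbbf_{q^2}^*$. Taking norms, $c=\lambda^2\N(\mu)^2$, a square, which is a contradiction. So the index-2 condition is automatic.

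For odd $\ell$, the key observation is that $\beta^{(q+1)/\ell}\in\bbbf_q$ forces $\beta^{q+1}=c$ to be an $\ell$-th power in $\bbbf_q$ only when $\ell\mid q-1$. Since $\gcd(q+1,q-1)\le2$, there is no such constraint for odd $\ell$. This is exactly why only $q+1\in\{2^t,\pi,2\pi\}$ works automatically.

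\textbf{Step 4: the odd-prime case and the role of $p$.} When (A) holds, there is at most one odd prime $\pi$, and then $q+1\in\{\pi,2\pi\}$. One must show that $\beta^{(q+1)/\pi}\in\bbbf_q$ is incompatible with $b\ne0$ and $c$ primitive. That is exactly the content of Theorem~\ref{teoceroA}, which I will invoke.

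When (A) fails, I would take $p$ as in the statement and write $x^{\frac{q+1}{p}+1}=h(x)f(x)+r(x)$. Then $\beta^{(q+1)/p}\in\bbbf_q$ iff $x^{(q+1)/p}\equiv a\pmod{f(x)}$, iff $r(x)=ax$.

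To connect this to (B), I would compute the coefficients of $h(x)$ by the linear recurrence coming from long division by $x^2+bx+c$, namely $h_{k}=-bh_{k+1}-ch_{k+2}$ read downward from the leading term. I would then show that $r(x)=ax$ is equivalent to a vanishing pattern among the $h_k$. The point is that a zero coefficient in the quotient signals a relation $\beta^j\in\bbbf_q\beta^{j'}$ that propagates to the required exponent.

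\textbf{Main obstacle.} The hardest step is showing that (B), the non-vanishing of all $\frac{q+1}{p}$ coefficients, rules out a drop of quasi-order at every odd prime, not just at $p$. Since $\qord$ is the least such exponent, a drop at a larger prime $\ell$ gives $\qord\mid (q+1)/\ell$. I would attempt to prove that $\qord(f(x))<q+1$ forces some $h_k=0$ with $k<\frac{q+1}{p}$, using that the remainders $x^{j}\bmod f(x)$ become proportional to $1$ at $j=\qord(f(x))\le (q+1)/p$.
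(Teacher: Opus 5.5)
\textbf{There is a genuine gap: the step linking $h(x)$ to the quasi-order is never proved, and the route you sketch for it cannot work.}

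Your Steps 1--3 are sound. $\qord(f(x))$ is the order of $\beta$ in $\bbbf_{q^2}^*/\bbbf_q^*$; $f(x)$ is primitive iff $c$ is primitive and $\qord(f(x))=q+1$; $b=0$ is excluded. The index-$2$ drop is ruled out because $c$ is a non-square. It is $c$, not $b$, that does this; $b\neq 0$ is what excludes $\beta^2\in\bbbf_q$ when $q+1=2\pi$.

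The problem is in Step 4, the only step that involves $h(x)$. Testing $r(x)=ax$ only detects whether $\beta^{(q+1)/p}\in\bbbf_q$, i.e.\ whether $\qord(f(x))$ divides $\frac{q+1}{p}$. In fact $r(x)=ax$ just says that the constant term of $h(x)$ vanishes. That is not equivalent to the failure of (B). In Example~\ref{ejcero}, $x^2+8x+2$ over $\bbbf_{29}$ has $r(x)=26x+5\neq ax$. Yet the sixth coefficient of $h(x)$ (that of $x^4$) is $0$, and $\qord(x^2+8x+2)=6\nmid 10$. You notice this in your ``main obstacle'' paragraph, but you leave the needed statement as something you would attempt to prove. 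The direction ``primitive $\Rightarrow$ (B)'' is never argued at all.

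\textbf{The missing lemma.} Read the coefficients of $h(x)$ from the leading one, $h_1,h_2,\ldots,h_{(q+1)/p}$, as in Theorem~\ref{MiCar2E}. These are exactly the $x$-coefficients of the successive remainders of $x^k$. Put $h_0=0$; the division recurrence $h_{k+1}=-bh_k-ch_{k-1}$ gives $x^k\equiv h_kx-c\,h_{k-1}\pmod{f(x)}$ for every $1\le k\le\frac{q+1}{p}$. This is precisely the computation in the proof of Proposition~\ref{prodosA}. Hence $h_k=0$ iff $\beta^k\in\bbbf_q^*$, so (B) holds iff $\qord(f(x))>\frac{q+1}{p}$.

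\textbf{Closing the argument with your reductions.}
\begin{itemize}
\item If $f(x)$ is primitive, then $\qord(f(x))=q+1>\frac{q+1}{p}$, which gives (B).
\item Conversely, assume (B), $b\neq0$ and $c$ primitive, and let $\rho=\qord(f(x))$. Then $\rho\mid q+1$. By your Step 3, or trivially if $q$ is even, $(q+1)/\rho$ is odd.
\item So if $\rho<q+1$, some odd prime $\ell\ge p$ divides $(q+1)/\rho$. Hence $\rho\le\frac{q+1}{p}$ and $h_\rho=0$, contradicting (B).
\item Therefore $\rho=q+1$, and $f(x)$ is primitive by your Step 2.
\end{itemize}
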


Let $f(x)=x^2+bx+c \in \bbbf_{q}[x]$ be irreducible, with $b\neq 0$ and $c$ a primitive element of $\bbbf_{q}$. In contrast to Examples 6 and 7 of \cite{Vega}, the following example shows that if $f(x)$ is a non-primitive polynomial (that is, Conditions {\rm (A)} and {\rm (B)} of Theorem \ref{MiCar2} are not satisfied), then the constant term of $h(x)$ is not necessarily zero.

\begin{example}\label{ejcero}
Let $q=29$. Note that $\langle 2 \rangle=\bbbf_{29}^*$, $q+1=30=2(3)(5)$, $p=3$, $\frac{q+1}{p}=10$, and therefore the finite field $\bbbf_{29}$ does not satisfy Condition {\rm (A)}. Consider the polynomials $x^2+5x+2$, $x^2+4x+2$, and $x^2+8x+2$. It is not difficult to see that these three polynomials are irreducible over $\bbbf_{29}$ and 

\begin{eqnarray}
\frac{x^{11}-(27x+3)}{(x^2+5x+2)} &=& x^9+24x^8+23x^7+11x^6+15x^5+ \nonumber \\
&& 19x^4+20x^3+7x^2+12x+13\;, \nonumber \\
\frac{x^{11}-26x}{(x^2+4x+2)} &=& x^9+25x^8+14x^7+10x^6+19x^5+ \nonumber \\
&& 20x^4+27x^3+26x^2+16x+0\;,\;\mbox{ and}  \nonumber \\
\frac{x^{11}-(26x+5)}{(x^2+8x+2)} &=& x^9+21x^8+4x^7+13x^6+4x^5+ \nonumber \\
&& 0x^4+21x^3+6x^2+26x+12 \;. \nonumber
\end{eqnarray}

\noindent
Thus, note that $x^2+5x+2$ satisfies Condition {\rm (B)} and therefore it is a primitive polynomial. However the polynomials $x^2+4x+2$ and $x^2+8x+2$ do not satisfy Condition (B) in different ways, and therefore they are non-primitive. In fact, it is not difficult to see that \ord$(x^2+5x+2)=840$, \ord$(x^2+4x+2)=280$, and \ord$(x^2+8x+2)=168$.
\end{example}

\begin{remark}\label{remAA}
Let $f(x)=x^2+bx+c \in \bbbf_{q}[x]$ be any irreducible polynomial, with $b\neq 0$ and $c$ a primitive element of $\bbbf_{q}$. As we will see later in Theorem \ref{MiCar2E}, by using the polynomial $h(x)$ in Theorem \ref{MiCar2}, it is always possible to determine the quasi-order and the order of $f(x)$.
\end{remark}

Finally, we recall the following result that will be important for this work. For the sake of completeness, we also provide its proof.

\begin{proposition}\label{prodosA}
\cite[Proposition 2]{Vega} Let $f(x) \in \bbbf_q[x]$ be a monic polynomial of degree two, where $f(0) \neq 0$, and assume for some $a\in \bbbf_q^*$ that $x^{\rho} \equiv a \pmod{f(x)}$, with $\rho=\mbox{\qord}(f(x))$. Let 

$$g(x)=\frac{x^{\rho+1}-ax}{f(x)}=g_{1}x^{\rho-1}+g_{2}x^{\rho-2}+\ldots+g_{\rho-1}x+g_{\rho}\;.$$ 

\noindent
Then, for $1\leq i\leq \rho-1$, $g_i\neq 0$ and $g_{\rho}=0$  (i.e. apart from the constant term, all terms of $g(x)$ are non-zero). 
\end{proposition}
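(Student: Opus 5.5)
Write $f(x)=x^2+bx+c$ with $c\neq 0$. The plan is to compute the coefficients of $g(x)$ explicitly through the sequence of remainders of $x^k$ modulo $f(x)$, and then read off which ones vanish using the minimality of $\rho$.

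First, for each $k\geq 0$ write $x^k \equiv u_k x + v_k \pmod{f(x)}$, with $u_k,v_k\in \bbbf_q$. Since $x^{k+1}\equiv u_k x^2+v_k x \equiv (v_k-bu_k)x - cu_k$, we get the recurrences $u_{k+1}=v_k-bu_k$ and $v_{k+1}=-cu_k$. By the definition of quasi-order, $\rho$ is the least positive integer with $u_\rho=0$ (then $v_\rho=a$). Hence $u_k\neq 0$ for $1\leq k\leq \rho-1$; note $u_1=1$, so $\rho\geq 2$.

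Next, I relate the coefficients of $g(x)$ to the $u_k$. Long division of $x^{\rho+1}$ by $f(x)$ produces the quotient $\sum_{j=0}^{\rho-1} u_{j+1} x^{\rho-1-j}$. This is the standard fact that the quotient of $x^{n}$ by a monic quadratic has coefficients equal to the linear coefficients of the successive remainders. To check it, I will verify by induction that the remainder of $x^{\rho+1}$ is $u_{\rho+1}x+v_{\rho+1}$ and that the quotient coefficients satisfy the same recurrence as the $u_k$. Equivalently, one can use that $q_n(x):=(x^{n}-u_nx-v_n)/f(x)$ satisfies $q_{n+1}=xq_n+u_n$, which follows immediately from the recurrences above. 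Iterating gives $q_{\rho+1}(x)=\sum_{k=1}^{\rho} u_k x^{\rho-k}$.

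Now $u_{\rho+1}=v_\rho-bu_\rho=a$ and $v_{\rho+1}=-cu_\rho=0$, so $x^{\rho+1}\equiv ax$, and therefore $g(x)=q_{\rho+1}(x)$. Thus $g_i=u_i$ for $1\leq i\leq\rho$. Then $g_i\neq0$ for $i<\rho$ by the minimality of $\rho$, and $g_\rho=u_\rho=0$.

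The only delicate point is the identity $q_{n+1}=xq_n+u_n$. Multiplying by $f$, it reduces to $x^{n+1}-u_{n+1}x-v_{n+1} = x(x^n-u_nx-v_n)+u_nf(x)$. This is checked directly from the recurrences, so no real obstacle is expected.
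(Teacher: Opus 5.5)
Your proof is correct and takes essentially the paper's approach. Both arguments identify the quotient coefficients $g_k$ with the $x$-coefficient $u_k$ of the remainder of $x^k$ modulo $f(x)$, which satisfies $u_{k+1}=-(bu_k+cu_{k-1})$, and both conclude by the minimality of $\rho$; your identity $q_{n+1}=xq_n+u_n$ simply makes explicit the paper's ``continue in the same way'' induction.

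One step you leave implicit should be stated: if $u_k=0$ for some $k\geq 1$, then $v_k\neq 0$, so $k$ would be a smaller quasi-order. This holds because $v_k=-cu_{k-1}$ with $u_{k-1}\neq 0$ at the first vanishing index, or simply because $f(0)\neq 0$ implies $f(x)\nmid x^k$.
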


\begin{proof}
Assume $f(x)=x^2+bx+c$, with $c\neq0$ (observe that $b$ could be zero). Since $f(x)$ is monic and $g(x)=x\frac{x^{\rho}-a}{f(x)}$, $g_{1}=1$ and $g_{\rho}=0$. Recall that $\rho$ is the smallest positive integer such that $f(x)|(x^{\rho}-a)$, for some element $a \in \bbbf_q^*$. Let $g_2=-b$ and note that $x^2=g_1f(x)+g_2x-g_1c$. If $g_2=0$ then $\rho=2$ and $a=-g_1c \in \bbbf_q^*$. On the contrary, if $g_2\neq 0$ then let $g_3=-(g_2b+g_1c)$ and note that $x^3=(g_1x+g_2)f(x)+g_3x-g_2c$. If $g_3=0$ then $\rho=3$ and $a=-g_2c \in \bbbf_q^*$. On the contrary, if $g_3\neq 0$ then let $g_4=-(g_3b+g_2c)$ and note that $x^4=(g_1x^2+g_2x+g_3)f(x)+g_4x-g_3c$. Clearly we can continue in the same way and get eventually 

$$x^{\rho}=(g_{1}x^{\rho-2}+g_{2}x^{\rho-3}+\ldots+g_{\rho-1})f(x)+g_{\rho}x-g_{\rho-1}c\;,$$ 

\noindent
where $g_{\rho}=0$ and $a=-g_{\rho-1}c \in \bbbf_q^*$.
\end{proof}

\section{Preliminary results}\label{sectres}

As an initial preliminary result, we present the following theorem that provides an interesting connection between the order and the quasi-order of any irreducible polynomial of positive degree.
 
\begin{theorem}\label{teocero}
Let $h(x) \in \bbbf_q [x]$ be a polynomial of positive degree with $h(0) \neq 0$. Let $r=\ord(h(x))$ and $\rho=\qord(h(x))$. If $h(x)$ is an irreducible polynomial, then $r=\gcd(r,q-1)\rho$.
\end{theorem}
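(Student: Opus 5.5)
Let $m=\deg h(x)$ and let $\beta\in\bbbf_{q^m}$ be a root of the irreducible $h(x)$. It is standard that $r=\ord(h(x))$ equals the multiplicative order of $\beta$ in $\bbbf_{q^m}^*$. The first step is to translate the quasi-order into the same language. Since $h(x)$ is irreducible, $x^{k}\equiv a \pmod{h(x)}$ with $a\in\bbbf_q^*$ holds iff $\beta^{k}=a$; indeed $h(x)$ is the minimal polynomial of $\beta$ up to a scalar, so $h(x)\mid x^k-a$ iff $\beta$ is a root of $x^k-a$. Hence $\rho$ is the least positive integer with $\beta^{\rho}\in\bbbf_q^*$. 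Equivalently, $\rho$ is the order of the coset $\beta\bbbf_q^*$ in the quotient group $\bbbf_{q^m}^*/\bbbf_q^*$.

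The second step is a pure cyclic-group computation. Let $G=\langle\beta\rangle$, which is cyclic of order $r$. Since $\bbbf_{q^m}^*$ is cyclic, $\bbbf_q^*$ is its unique subgroup of order $q-1$. So $G\cap\bbbf_q^*$ is the unique subgroup of $G$ of order $\gcd(r,q-1)$: in a cyclic group the intersection of the subgroups of orders $r$ and $q-1$ is the subgroup of order $\gcd(r,q-1)$. The least $\rho$ with $\beta^\rho\in G\cap\bbbf_q^*$ is then the index $[G:G\cap\bbbf_q^*]$, because $\{k:\beta^k\in H\}$ is the subgroup $\rho\bbbz$ and $\beta^\rho$ generates $H$. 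Concretely, $\beta^k\in H$ iff $\beta^k$ has order dividing $|H|=d$, iff $r\mid kd$, iff $\frac{r}{d}\mid k$. Therefore $\rho=r/\gcd(r,q-1)$, which is exactly $r=\gcd(r,q-1)\rho$.

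Remark \ref{remuno} is consistent with this picture: the set of $e$ with $x^e$ congruent to a constant is $\rho\bbbz$, and $r$ lies in it. The only point needing care, and the place irreducibility is genuinely used, is the first step, namely that congruence of $x^k$ to a nonzero constant modulo $h(x)$ is equivalent to $\beta^k\in\bbbf_q^*$. Once that equivalence is established, the rest is routine arithmetic in cyclic groups, so I do not expect a real obstacle.
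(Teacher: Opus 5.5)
Your proof is correct, but it takes a different route from the paper's. The paper first invokes Lemma 3.17 of Lidl--Niederreiter. That lemma gives $r=d\rho$, where $d$ is the multiplicative order of the constant $a$ in $x^{\rho}\equiv a \pmod{h(x)}$, so $d\mid q-1$ and hence $d\mid\gcd(r,q-1)$. The paper then rules out $d<\gcd(r,q-1)$ by contradiction. Since $\beta^{r(q-1)/\gcd(r,q-1)}=1$, the element $a'=\beta^{r/\gcd(r,q-1)}$ lies in $\bbbf_q^*$, which forces $h(x)\mid x^{r/\gcd(r,q-1)}-a'$ with an exponent smaller than $\rho$.

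You instead identify $\rho$ outright as the order of $\beta$ modulo $\bbbf_q^*$. Using the cyclic structure of $\bbbf_{q^m}^*$, you compute the full set $\{k:\beta^k\in\bbbf_q^*\}$ as exactly the multiples of $r/\gcd(r,q-1)$. This can be said even more directly: $\beta^k\in\bbbf_q^*$ iff $r\mid k(q-1)$ iff $\frac{r}{\gcd(r,q-1)}\mid k$. Your argument delivers both inequalities at once and needs no external lemma.

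The paper's decomposition has the merit of isolating where irreducibility enters. The relation $r=\ord(a)\,\rho$ with $\ord(a)\mid\gcd(r,q-1)$ holds for every $h$ with $h(0)\neq 0$, and irreducibility is used only to force $\ord(a)$ to be as large as possible. For example, $x^2-1$ over $\bbbf_3$ has $r=\rho=2$ and $\gcd(r,q-1)=2$, so the identity fails there. Your argument uses irreducibility from the start, to replace congruences modulo $h(x)$ by equations in $\beta$. In exchange it is self-contained and yields the sharper description of $\rho$ as the index $[\langle\beta\rangle:\langle\beta\rangle\cap\bbbf_q^*]$.
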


\begin{proof}
Let $F$ be the splitting field of $h(x)$. By \cite[Lemma 3.17]{Lidl}, there must exist an integer $d$ such that $d | (q-1)$ and $r=d \rho$. Then we have $d | \gcd(r,q-1)$. Suppose for the sake of contradiction that $d < \gcd(r,q-1)$. Thus we have $r = \gcd(r,q-1) \rho / t$, for some integer $t > 1$, which implies that $\rho > \rho / t = r/\gcd(r,q-1)$. Let $\beta$ be a fixed root of $h(x)$ in $F$. Since $\beta$ has order $r$ (see \cite[Theorem 3.33]{Lidl}), $\beta^{r(q-1)/\gcd(r,q-1)}=1$; hence, $\beta^{r/\gcd(r,q-1)} \in \bbbf_q$. Setting $\beta^{r/\gcd(r,q-1)}=a$. This means that $\beta$ is a root of $x^{\rho/t}-a$, and consequently $h(x) | (x^{\rho/t}-a)$. This yields a contradiction since $\rho / t < \rho$.
\end{proof}

Observe that the converse of the previous theorem is not true. For example, if $q=3$ and $h(x)=(x+1)^2$, then $r=6$, $\rho=3$ and $\gcd(r,q-1)=2$. 

\medskip
The following is an important consequence of our previous theorem.

\begin{proposition}\label{procero}
Let $f(x) \in \bbbf_q [x]$ be a polynomial of positive degree with $f(0) \neq 0$ and let $a \in \bbbf_q^*$ and $\rho, r \in \bbbz^+$ be the uniquely determined values such that $x^{\rho} \equiv a \pmod{f(x)}$, $\rho=\qord(f(x))$, and $r=\ord(f(x))$. If $\gcd(r,q-1)<q-1$, then $a$ is not a primitive element in $\bbbf_q$.  
\end{proposition}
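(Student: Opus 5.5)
The plan is to avoid Theorem \ref{teocero} altogether, since the statement does not assume $f(x)$ irreducible. Instead I will work directly from Remark \ref{remuno} and the definition of the order. The goal is to show that the multiplicative order of $a$ in $\bbbf_q^*$ divides $\gcd(r,q-1)$. Then the hypothesis $\gcd(r,q-1)<q-1$ forces $\ord(a)<q-1$, so $a$ cannot be primitive.

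\textbf{Step 1.} By Definition \ref{defcero}, $f(x)\mid x^r-1$, that is, $x^r\equiv 1 \pmod{f(x)}$. This is a congruence of the form $x^e\equiv d\pmod{f(x)}$ with $e=r$ and $d=1\in\bbbf_q^*$. Remark \ref{remuno}, whose argument uses only that $\rho=\qord(f(x))$ and $x^\rho\equiv a$, then yields $\rho\mid r$ and $1=a^{r/\rho}$.

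\textbf{Step 2.} From $a^{r/\rho}=1$, the order of $a$ divides $r/\rho$, and hence divides $r$. Since $a\in\bbbf_q^*$, its order also divides $q-1$. Therefore $\ord(a)\mid\gcd(r,q-1)$, which gives $\ord(a)\le\gcd(r,q-1)<q-1$. So $a$ does not generate $\bbbf_q^*$, which is the claim.

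\textbf{Main obstacle.} The only delicate point is justifying the use of Remark \ref{remuno} when $f(x)$ is not irreducible. I need to check that its reasoning still holds: the remainder $x^{r'}$ with $0<r'<\rho$ is non-constant modulo $f(x)$ by minimality of $\rho$, and $a^y\neq 0$. Granting this, the rest of the argument is a routine divisibility computation.
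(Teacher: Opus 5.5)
Your proof is correct, and it differs from the paper's at the key step. Both arguments begin the same way: Definition~\ref{defcero} and Remark~\ref{remuno} give $a^{r/\rho}=1$.

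\textbf{The paper's route.} It then invokes Theorem~\ref{teocero} to replace $r/\rho$ by $\gcd(r,q-1)$, and concludes $a^{\gcd(r,q-1)}=1$.

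\textbf{Your route.} You observe that the multiplicative order of $a$ divides both $r/\rho$ (hence $r$) and $q-1$. It therefore divides $\gcd(r,q-1)<q-1$.

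\textbf{What each buys.} Your route is more elementary. More importantly, it is valid in the generality in which the proposition is stated. Theorem~\ref{teocero} is proved only for irreducible polynomials, and its identity $r=\gcd(r,q-1)\rho$ genuinely fails for reducible ones. For example, $f(x)=x^2-1$ over $\bbbf_3$ has $r=\rho=2$, but $\gcd(r,q-1)\rho=4$. So the paper's proof as written covers only irreducible $f(x)$. That suffices for its later uses in Theorems~\ref{MiCar2E} and~\ref{teodiez}, but not for the proposition as stated. In the irreducible case, the paper's route does give the exact equality $r/\rho=\gcd(r,q-1)$, which is more than this proposition needs.

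\textbf{One small completion.} In your check of Remark~\ref{remuno}, minimality of $\rho$ only rules out $x^{r'}$ being congruent to a \emph{nonzero} constant. The case $x^{r'}\equiv 0 \pmod{f(x)}$ is excluded separately, because $f(0)\neq 0$ makes $x$ invertible modulo $f(x)$.
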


\begin{proof}
By Definition \ref{defcero} and Remark \ref{remuno}, $x^r \equiv 1 \pmod{f(x)}$ and $1=a^{r/\rho}$. On the other hand, by Theorem \ref{teocero}, we have $r=\gcd(r,q-1)\rho$ and therefore $1=a^{\gcd(r,q-1)}$. Since $\gcd(r,q-1)<q-1$, clearly $a$ could not be a primitive element in $\bbbf_q$.  
\end{proof}

Let $f(x)=x^2+bx+c \in \bbbf_{q}[x]$ be irreducible, with $b\neq 0$ and $c$ is a primitive element. Suppose that Condition {\rm (A)} is not satisfied (see Example \ref{ejcero} and Remark \ref{remAA}). Then, under Condition {\rm (B)}, $f(x)$ clearly may or may not be primitive. By means of the following result, which can be considered as an extension of Theorem \ref{MiCar2}, we can determine the order and the quasi-order of $f(x)$:

\begin{theorem}\label{MiCar2E}
Let $f(x)=x^2+bx+c \in \bbbf_{q}[x]$ be irreducible, with $b\neq 0$ and $c$ a primitive element of $\bbbf_q$. Suppose that $q$ does not satisfy Condition {\rm (A)} in Theorem \ref{MiCar2}. In this case, let $p$ be the smallest odd prime dividing $q+1$ and let $h(x), r(x) \in \bbbf_q [x]$ be the uniquely polynomials such that $\deg(r(x))<2$ and 

$$h(x)=\frac{x^{\frac{q+1}{p}+1}-r(x)}{f(x)}=h_{1}x^{\frac{q+1}{p}-1}+h_{2}x^{\frac{q+1}{p}-2}+\ldots+h_{\frac{q+1}{p}-1}x+h_{\frac{q+1}{p}}\;.$$ 

\noindent
If Condition {\rm (B)}, in Theorem \ref{MiCar2}, is satisfied, then \qord$(f(x))=q+1$ and \ord$(f(x))=q^2-1$. If, on the contrary, such condition is not satisfied, then let $1\leq k \leq \frac{q+1}{p}$ be the smallest integer such that $h_k=0$. Then \qord$(f(x))=k$ and \ord$(f(x))=k(q-1)$.
\end{theorem}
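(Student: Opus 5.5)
Let $\beta$ be a root of $f(x)$ in $\bbbf_{q^2}$. Since $f(x)$ is irreducible of degree two, $\N_{\bbbf_{q^2}/\bbbf_q}(\beta)=\beta^{q+1}=c$. Hence $x^{q+1}\equiv c \pmod{f(x)}$, so Remark \ref{remuno} gives $\rho:=\qord(f(x))$ dividing $q+1$, with $c=a^{(q+1)/\rho}$ where $x^\rho\equiv a\pmod{f(x)}$.

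The first step is to pin down $\ord(f(x))$ from $\rho$. Since $c$ is primitive in $\bbbf_q$, so is $a$: if $a$ had order $s<q-1$, then $c=a^{(q+1)/\rho}$ would also have order dividing $s$. By Proposition \ref{procero}, applied in contrapositive form, we get $\gcd(r,q-1)=q-1$ for $r=\ord(f(x))$. Theorem \ref{teocero} then yields $r=(q-1)\rho$.

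The second step uses primitivity of $c$ and $a$ to restrict $\rho$. We have $\gcd((q+1)/\rho,\,q-1)=1$. Since $\gcd(q+1,q-1)=1+\delta$, this forces $(q+1)/\rho$ to be odd when $q$ is odd (and imposes nothing when $q$ is even). So $(q+1)/\rho$ is $1$ or is divisible by some odd prime $p'\mid q+1$, and in the latter case $p'\geq p$. Consequently, either $\rho=q+1$ or $\rho\leq (q+1)/p$.

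The third step reads $\rho$ off from $h(x)$. Write $M=(q+1)/p$ and set $h^{(k)}(x)=\bigl(x^{k+1}-r_k(x)\bigr)/f(x)$ with $\deg r_k<2$. The recursion in the proof of Proposition \ref{prodosA}, namely $g_1=1$, $g_2=-b$, $g_{j+1}=-(g_jb+g_{j-1}c)$, generates the coefficients of $h^{(k)}$. These coefficients are the same for every $k$, since the quotient of $x^{k+1}$ by the monic $f$ has coefficients independent of $k$ read from the top. Thus $h_j=g_j$ for $1\leq j\leq M$, and $x^j\equiv g_{j+1}x - g_jc \pmod{f(x)}$. Therefore $x^k$ is congruent to a nonzero constant exactly when $g_{k+1}=0$ (the constant $-g_kc$ is nonzero because $g_k\neq0$ at the first vanishing index). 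The first vanishing index, then, is the quasi-order plus one, up to the indexing shift. I expect the main obstacle to be reconciling this index shift with the statement's claim $\qord=k$ for $h_k=0$: with $h(x)=(x^{M+1}-r(x))/f(x)$ of degree $M-1$, the coefficient $h_M$ is the constant term, which corresponds to $g_M$. So I must check carefully that $h_k$ corresponds to $g_{k+1}$ in Proposition \ref{prodosA}'s normalization, where $g(x)=(x^{\rho+1}-ax)/f$ has $g_\rho=0$ and degree $\rho-1$. Matching $h$ with $g$ (both have degree one less than the exponent minus one) gives $h_j=g_j$, and $g_\rho=0$ at $\rho=\qord$, so the first zero of $h$ occurs exactly at $k=\rho$ whenever $\rho\leq M$. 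I will verify this against Example \ref{ejcero}: there $h_{10}=0$ gives order $280=10\cdot28$, and $h_6=0$ gives $168=6\cdot28$, which is consistent.

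Finally, combine the steps. If Condition (B) holds, no $h_j$ vanishes for $j\leq M$, so $\rho>M$; by the second step, $\rho=q+1$ and $\ord(f(x))=(q-1)(q+1)=q^2-1$. Otherwise, the first zero $k\leq M$ satisfies $g_k=0$ with all earlier $g_j\neq0$. The proof of Proposition \ref{prodosA} then shows $\rho=k$, and the first step gives $\ord(f(x))=k(q-1)$.
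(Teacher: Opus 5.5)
Your proposal is correct. The order computation in your first step uses exactly the paper's ingredients: Remark~\ref{remuno} applied to $x^{q+1}\equiv c$, then Theorem~\ref{teocero} and Proposition~\ref{procero}. You phrase it directly (contrapositive) where the paper argues by contradiction. Reading $\qord(f(x))$ off the first vanishing coefficient is the same mechanism as the paper's identity $h'(x)=xh(x)+h'_{\frac{q+1}{p}+1}$.

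The genuine difference is in the Condition~{\rm (B)} case. The paper simply invokes Theorem~\ref{MiCar2} to conclude that $f(x)$ is primitive. You instead prove it from scratch in your second step: $a$ and $c=a^{(q+1)/\rho}$ are both primitive, so $(q+1)/\rho$ is coprime to $q-1$, hence odd, and therefore either $\rho=q+1$ or $\rho\le\frac{q+1}{p}$. This buys several things:
\begin{itemize}
\item The theorem becomes self-contained.
\item It explains why inspecting only the first $\frac{q+1}{p}$ coefficients of $h(x)$ suffices.
\item When $q$ fails (A), it reproves that Condition~{\rm (B)} holds iff $\qord(f(x))=q+1$ iff $f(x)$ is primitive.
\item In the non-{\rm (B)} case it yields extra information: $k$ divides $q+1$ and $(q+1)/k$ is odd. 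For instance, $k=10$ and $k=6$ for $q=29$ in Example~\ref{ejcero}.
\end{itemize}
The paper's proof is shorter only because this work is delegated to Theorem~\ref{MiCar2}.

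One slip to clean up: the congruence you display should be $x^j\equiv g_jx-g_{j-1}c \pmod{f(x)}$, not $x^j\equiv g_{j+1}x-g_jc$. Check $j=2$: $x^2\equiv -bx-c=g_2x-g_1c$. With the correct indexing there is no shift at all. We have $h_j=g_j$ for $1\le j\le\frac{q+1}{p}$, and $x^k$ is a nonzero constant exactly when $g_k=0$; the constant is $-g_{k-1}c\neq 0$ at the first vanishing index. This is the conclusion you eventually reach, so the sentence about the first vanishing index being ``the quasi-order plus one'' should simply be removed.
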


\begin{proof}
If Condition {\rm (B)} is satisfied, then, by Theorem \ref{MiCar2}, $f(x)$ is primitive and \qord$(f(x))=q+1$ (see \cite[Theorem 3.18]{Lidl}) and \ord$(f(x))=q^2-1$. 

In the contrary case, let $h'(x), r'(x) \in \bbbf_q [x]$ be the uniquely polynomials such that $h'(x)=\frac{x^{\frac{q+1}{p}+2}-r'(x)}{f(x)}$ and $\deg(r'(x))<2$. Thus note that 

\begin{eqnarray}
h'(x)&=&\frac{x^{\frac{q+1}{p}+2}-r'(x)}{f(x)}\;, \nonumber \\
&=&h'_{1}x^{\frac{q+1}{p}}+h'_{2}x^{\frac{q+1}{p}-1}+\ldots+h'_{\frac{q+1}{p}}x+h'_{\frac{q+1}{p}+1}\;, \nonumber \\
&=&h_{1}x^{\frac{q+1}{p}}+h_{2}x^{\frac{q+1}{p}-1}+\ldots+h_{\frac{q+1}{p}}x+h'_{\frac{q+1}{p}+1}\;, \nonumber \\
&=&xh(x)+h'_{\frac{q+1}{p}+1}\;. \nonumber
\end{eqnarray}

\noindent
Hence

$$x^k=f(x)(h_{1}x^{k-2}+h_{2}x^{k-3}+\ldots+h_{k-2}x+h_{k-1})+h_kx+{h'}_{k+1}\;.$$

\noindent
Since $h_k={h'}_{k}=0$, we have

$$\frac{x^{k}-{h'}_{k+1}}{f(x)}=h_{1}x^{k-2}+h_{2}x^{k-3}+\ldots+h_{k-2}x+h_{k-1}\;,$$

\noindent
and this means that $x^{k} \equiv {h'}_{k+1} \pmod{f(x)}$. But $f(x) \nmid x^{k}$, hence ${h'}_{k+1} \neq 0$. Therefore $k=\qord(f(x))$. Let $r=\ord(f(x))$. By \cite[Proposition 1]{Vega}, we have $f(x) | (x^{q+1}-c)$; thus, according to Remark \ref{remuno}, $k|(q+1)$ and $c=({h'}_{k+1})^{(q+1)/k}$. On the other hand, by Theorem \ref{teocero}, $r=k \gcd(r,q-1)$. Suppose toward a contradiction that \ord$(f(x))=k \gcd(r,q-1) \neq k (q-1)$. Thus $\gcd(r,q-1)<q-1$ and, by Proposition \ref{procero}, ${h'}_{k+1}$ is not a primitive element in $\bbbf_q$. But this is a contradiction because $c$ is a primitive element of $\bbbf_q$ and $c=({h'}_{k+1})^{(q+1)/k}$. 
\end{proof}

The following result, which is quite similar to \cite[Proposition 1]{Vega}, gives an explicit description of all possible irreducible divisors of the polynomial $x^{q+1}-c$, where $c \in \bbbf_q^*$. In particular, this result will be useful to us when $c=\alpha$. For completeness, we include its proof.

\begin{proposition}\label{prouno}
Let $c \in \bbbf_q^*$ and let $f(x)\neq 1$ be a monic irreducible polynomial in $\bbbf_q[x]$. Let $0\leq i < q-1$ be the unique integer such that $c=\alpha^i$. Then 

\begin{equation}\label{eqcero}
x^{q+1}-c=\prod_{r \in {\cal C}_i^{(q-1,q^2)}}(x-r)\;,
\end{equation}

\noindent
and if $f(x)$ is a divisor of $x^{q+1}-c$ then $f(x)$ is either of the form $f(x)=x^2 \pm bx+c$ or $f(x)=x \pm e$, for some elements $b,e \in \bbbf_q$. In particular, if $c=\alpha$ and $q$ is even, then $f(x)$ is either of the form $f(x)=x^2+bx+\alpha$ or $f(x)=x+\alpha^{\frac{q}{2}}$, for some element $b \in \bbbf_q^*$; and if $q$ is odd, then $f(x)$ is of the form $f(x)=x^2+bx+\alpha$, for some element $b \in \bbbf_q$ ($b$ could be zero). Moreover, $x^2+\alpha$ is an irreducible divisor of $x^{q+1}-\alpha$ iff $q$ is odd and $q \equiv 1 \!\!\pmod{4}$. 
\end{proposition}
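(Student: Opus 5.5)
We first establish the factorization (\ref{eqcero}). Every $r\in{\cal C}_i^{(q-1,q^2)}$ has the form $r=\gamma^{i+j(q-1)}$, so $r^{q+1}=\gamma^{i(q+1)}\gamma^{j(q^2-1)}=\alpha^i=c$; hence each such $r$ is a root of $x^{q+1}-c$. The class ${\cal C}_i^{(q-1,q^2)}$ has exactly $(q^2-1)/(q-1)=q+1$ elements, which is also the degree of $x^{q+1}-c$. Since $x^{q+1}-c$ is monic of that degree, (\ref{eqcero}) follows.

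Next, let $f(x)$ be a monic irreducible divisor of $x^{q+1}-c$. Each root $r$ lies in $\bbbf_{q^2}$, so $\deg f\le 2$.

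\emph{Degree 2.} The roots are $r$ and $r^q$. Then $f(x)=x^2-\Tr(r)x+\N(r)$ with $\N(r)=r^{q+1}=c$, which gives the form $x^2\pm bx+c$ with $b=\mp\Tr(r)$.

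\emph{Degree 1.} Then $f(x)=x-e'$ with $e'\in\bbbf_q$ and $e'^{q+1}=e'^2=c$. This gives the form $x\pm e$.

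Now specialize to $c=\alpha$. A linear factor requires $\alpha=e^2$ for some $e\in\bbbf_q$, i.e.\ $\alpha$ must be a square in $\bbbf_q$. For $q$ odd this is impossible, since a primitive element is a nonsquare. So only quadratic factors $x^2+bx+\alpha$ occur, and $b$ may be zero.

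For $q$ even, squaring is a bijection on $\bbbf_q$, and the unique square root of $\alpha$ is $\alpha^{q/2}$, because $(\alpha^{q/2})^2=\alpha^q=\alpha$. This gives the factor $x+\alpha^{\frac{q}{2}}$ (in characteristic 2, $\pm$ coincide). For the quadratic factors with $q$ even we must show $b\neq 0$. If $b=0$, then $x^2+\alpha=(x+\alpha^{q/2})^2$ is reducible, so any irreducible quadratic factor has $b\ne0$.

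For the last claim, $x^2+\alpha$ divides $x^{q+1}-\alpha$ iff its roots $r$ satisfy $r^{q+1}=\alpha$ and $r^2=-\alpha$. We argue by parity of $q$.

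\emph{$q$ even.} Then $x^2+\alpha$ is reducible, as shown above, so the equivalence holds trivially with $q$ not odd.

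\emph{$q$ odd.} First, $x^2+\alpha$ is irreducible iff $-\alpha$ is a nonsquare in $\bbbf_q$. Since $\alpha$ is a nonsquare, this happens iff $-1$ is a square, i.e.\ iff $q\equiv1\pmod 4$.

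When it is irreducible, the divisibility still has to be checked. From $r^2=-\alpha$ we get $r^{q+1}=(r^2)^{(q+1)/2}=(-\alpha)^{(q+1)/2}$. We need this to equal $\alpha$, i.e.\ $(-1)^{(q+1)/2}\alpha^{(q-1)/2}=1$. Since $\alpha$ is a nonsquare, $\alpha^{(q-1)/2}=-1$, so the condition becomes $(-1)^{(q+1)/2}=-1$, which means $(q+1)/2$ is odd. That is exactly $q\equiv1\pmod4$.

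So for $q\equiv1\pmod4$, $x^2+\alpha$ is irreducible and divides $x^{q+1}-\alpha$. For $q\equiv3\pmod 4$ it is reducible, and indeed fails to divide. This gives the stated equivalence.

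The main obstacle is keeping the parity bookkeeping in this last step consistent, in particular showing that irreducibility and divisibility coincide exactly when $q\equiv1\pmod4$.
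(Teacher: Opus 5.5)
Your proof is correct and follows essentially the same route as the paper: the norm computation $r^{q+1}=\alpha^i$ on ${\cal C}_i^{(q-1,q^2)}$ gives the factorization, and the rest goes through the degree bound from $\bbbf_{q^2}$, the trace/norm form $(x-r)(x-r^q)$ of the quadratic factors, and the quadratic character of $-1$ for the final claim. The differences are minor. You exclude linear factors for odd $q$ by noting that a primitive element is a nonsquare, rather than via the paper's count of $|{\cal C}_1^{(q-1,q^2)}\cap\bbbf_q^*|$. Also, your explicit divisibility check at the end is not really needed: any irreducible $x^2+\alpha$ has roots $r,r^q$ with $r^{q+1}=\alpha$, so it automatically divides $x^{q+1}-\alpha$.
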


\begin{proof}
Let $r \in {\cal C}_i^{(q-1,q^2)}$; then $r=\gamma^{i+j(q-1)}$ for some $j$. Note that $\N_{\bbbf_{q^2}/\bbbf_q}(r)=r^{q+1}=\gamma^{(q+1)i+(q^2-1)j}=\gamma^{(q+1)i}=\alpha^i=c$. Consequently, (\ref{eqcero}) holds, because $|{\cal C}_i^{(q-1,q^2)}|=q+1$ and $r^{q+1}-c=0$ iff $r \in {\cal C}_i^{(q-1,q^2)}$. Clearly $r^{q^2}-r=0$, for all $r \in \bbbf_{q^2}$. Thus, by \cite[Lemma 2.12]{Lidl}, we certainly have $(x^{q+1}-c)|(x^{q^2}-x)$ and if $f(x) | (x^{q+1}-c)$, then $f(x) | (x^{q^2}-x)$ and, by \cite[Lemma 2.13]{Lidl}, $1\leq \deg(f(x))\leq 2$. That is, $f(x)$ is a monic irreducible polynomial of degree one or two. Now, if $q$ is even then $-r=r$ and if $q$ is odd then $-r=\gamma^{\frac{q+1}{2}(q-1)}\gamma^{i+j(q-1)}=\gamma^{i+(j+\frac{q+1}{2})(q-1)}\in {\cal C}_i^{(q-1,q^2)}$. Whether or not $q$ is even, we have 

$$r^q=\gamma^{i+i(q-1)+j(q-1)(q+1-1)}=\gamma^{i+i(q-1)-j(q-1)}=\gamma^{i+(i-j)(q-1)} \in {\cal C}_i^{(q-1,q^2)}\;.$$ 

\noindent
Therefore $r\in {\cal C}_i^{(q-1,q^2)}$ iff $-r,r^q \in {\cal C}_i^{(q-1,q^2)}$. Clearly, we have $(x+r)(x+r^q)=x^2+\Tr_{\bbbf_{q^2}/\bbbf_q}(r)x+\N_{\bbbf_{q^2}/\bbbf_q}(r) \in \bbbf_q[x]$ and $(x-r)(x-r^q)=x^2-\Tr_{\bbbf_{q^2}/\bbbf_q}(r)x+\N_{\bbbf_{q^2}/\bbbf_q}(r) \in \bbbf_q[x]$, and both polynomials are irreducible over $\bbbf_q$ iff $r \in {\cal C}_i^{(q-1,q^2)} \setminus \bbbf_q$. Since $r \in {\cal C}_i^{(q-1,q^2)}$ iff $\N_{\bbbf_{q^2}/\bbbf_q}(r)=r^{q+1}=\gamma^{(q+1)i}=c$, any irreducible divisor of degree two that divides $x^{q+1}-c$ is of the form $f(x)=x^2 \pm bx+c$ for some element $b \in \bbbf_q$. If $e \in {\cal C}_i^{(q-1,q^2)} \cap \bbbf_q^*$ then $0=e^{q+1}-c=e^2-c$ and the two polynomials $f(x)=x \pm e$ are linear divisors of $x^{q+1}-c$. 

Now assume that $c=\alpha^1$. Thus, observe that if $q$ is even then $|{\cal C}_1^{(q-1,q^2)} \cap \bbbf_q^*|=1$. In fact if $q$ is even, then $\alpha^{\frac{q}{2}}=\gamma^{1}\gamma^{(q-1)(\frac{q}{2}+1)}\in {\cal C}_{1}^{(q-1,q^2)} \cap \bbbf_q^*$ and any element $d \in \bbbf_q^*$ has a unique square root and therefore any polynomial of the form $x^2+d$ is always reducible. Consequently, if $q$ is even, then $f(x)$ is either of the form $f(x)=x^2+bx+\alpha$ or $f(x)=x+\alpha^{\frac{q}{2}}$, for some element $b \in \bbbf_q^*$. Alternatively, if $q$ is odd then $|{\cal C}_1^{(q-1,q^2)} \cap \bbbf_q^*|=0$. Hence, if $q$ is odd, then $f(x)$ is always of the form $f(x)=x^2+bx+\alpha$, for some element $b \in \bbbf_q$. Finally, if $q$ is odd, an element $c \in \bbbf_q^*$ is a square iff $c=\alpha^{2j}$, for some $0\leq j < \frac{q-1}{2}$. Thus the polynomial $x^2+\alpha$ is an irreducible divisor of $x^{q+1}-\alpha$ iff $-\alpha$ is not a square iff $-1$ is a square iff $q \equiv 1 \!\!\pmod{4}$. 
\end{proof}

In light of the previous theorem, we can now determine the exact number of distinct irreducible polynomials of degree two that divide $x^{q+1}-\alpha$.

\begin{corollary}\label{coruno}
Assume our current notation. Then

$$NI_q(x^{q+1}-\alpha,2)=\left\{ \begin{array}{cl}
		\;\frac{q}{2} & \mbox{ if $q$ is even}, \\
\\
		\;\frac{q+1}{2} & \mbox{ if $q$ is odd.}
			\end{array}
\right .$$
\end{corollary}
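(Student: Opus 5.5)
The count will come from the factorization (\ref{eqcero}) of Proposition \ref{prouno} with $c=\alpha$, i.e. $i=1$. That identity shows $x^{q+1}-\alpha$ has exactly $q+1$ distinct roots in $\bbbf_{q^2}$, namely the elements of ${\cal C}_1^{(q-1,q^2)}$, so it is squarefree. Every monic irreducible factor over $\bbbf_q$ has degree one or two, and the polynomial is the product of its distinct irreducible factors. Let $L$ be the number of linear factors. Comparing degrees gives $q+1 = L + 2\,NI_q(x^{q+1}-\alpha,2)$, so it suffices to compute $L=|{\cal C}_1^{(q-1,q^2)}\cap \bbbf_q^*|$.

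This intersection was already computed in the proof of Proposition \ref{prouno}. For completeness I would redo it directly. An element of $\bbbf_q^*$ has the form $\gamma^{(q+1)j}$. It lies in ${\cal C}_1^{(q-1,q^2)}$ exactly when $(q+1)j\equiv 1 \pmod{q-1}$, since the cosets of $\langle\gamma^{q-1}\rangle$ are indexed by exponents mod $q-1$. Because $q+1\equiv 2 \pmod{q-1}$, the condition becomes $2j\equiv 1\pmod{q-1}$, with $0\le j<q-1$.

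If $q$ is odd, $q-1$ is even, so the congruence has no solution and $L=0$. If $q$ is even, $q-1$ is odd and there is exactly one solution $j=q/2$, which gives the root $\alpha^{q/2}$, so $L=1$.

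Substituting into the degree count gives $NI_q = (q+1)/2$ for $q$ odd and $NI_q = q/2$ for $q$ even, as claimed. No step is a real obstacle. The only point needing care is that $x^{q+1}-\alpha$ is squarefree, so that each irreducible factor is counted once. This follows because (\ref{eqcero}) lists $q+1$ distinct roots, or equivalently because the derivative $(q+1)x^q$ shares no root with the polynomial when $\alpha\neq 0$.
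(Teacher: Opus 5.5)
Your proof is correct and follows essentially the same route as the paper: both take the $q+1$ distinct roots in ${\cal C}_1^{(q-1,q^2)}$, discard those lying in $\bbbf_q^*$ (only $\alpha^{q/2}$ when $q$ is even, none when $q$ is odd), and pair the remaining roots into quadratic factors. Your congruence $2j\equiv 1 \pmod{q-1}$ is a self-contained justification of the intersection count, which the paper instead takes from the proof of Proposition \ref{prouno}.
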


\begin{proof}
By the proof of Proposition \ref{prouno}, we know that if $q$ is even then the elements in ${\cal C}_1^{(q-1,q^2)} \setminus \{\alpha^{\frac{q}{2}}\}$ are the roots of all the monic irreducible polynomials of degree two in $\bbbf_q[x]$ that divide $x^{q+1}-\alpha$. Therefore $NI_q(x^{q+1}-\alpha,2)=(|{\cal C}_1^{(q-1,q^2)}|-1)/2=\frac{q}{2}$. Similarly, if $q$ is odd, then since $|{\cal C}_1^{(q-1,q^2)} \cap \bbbf_q^*|=0$, we have $NI_q(x^{q+1}-\alpha,2)=|{\cal C}_1^{(q-1,q^2)}|/2=\frac{q+1}{2}$. 
\end{proof}

\section{Some results regarding the number of irreducible and primitive polynomials of the form $x^2+\alpha^ix+\alpha$}\label{seccuatro}

At this point, we want to determine how many distinct irreducible and primitive polynomials of the form $x^2+\alpha^ix+\alpha$ exist, where $\alpha$ is our fixed primitive element of $\bbbf_{q}$ and $0\leq i<q-1$. To do this, we first need a simple way to determine whether or not a polynomial of the form $x^2 +\alpha^i x+\alpha \in \bbbf_{q}[x]$ is irreducible (or primitive). Thus, following the approach in \cite{Vega}, we define the disjoint integer sets:

{\small
\begin{eqnarray}
R_0&:=&\{\:0\leq i<q-1\:|\:\alpha^{i}=\alpha^{k+1}+\alpha^{q-k-1}, \mbox{ with } 0\leq k<\frac{q}{2}-1\:\}\; \mbox{ and}\nonumber \\
B_{0}&:=&\left\{\:0,1,2,\ldots,q-2\:\right\} \setminus R_0\;, \nonumber
\end{eqnarray}
}

\noindent
if $q$ is even and if $q$ is odd, we define instead the sets:

{\small
\begin{eqnarray}
R_1&:=&\left\{\:\residuo{i}{\frac{q-1}{2}}\:|\:\alpha^{i}=\alpha^{k+1}+\alpha^{q-k-1}\;,\mbox{ with } 0\leq k<\left\lfloor \frac{q-1}{4} \right\rfloor\:\right\}\; \mbox{ and}\nonumber \\
B_{1}&:=&\left\{\:0,1,2,\ldots,\frac{q-1}{2}-1\:\right\} \setminus R_1\;, \nonumber
\end{eqnarray}
}

\noindent
where $\residuo{i}{\frac{q-1}{2}}$ is the remainder of the division of $i$ by $\frac{q-1}{2}$, and $\lfloor v \rfloor$ is the largest integer less than or equal to $v$. Recall that $\delta=0$ if $q$ is even, and $\delta=1$ otherwise. Given the definition of $B_{\delta}$, it is easy to see that $x^2+\alpha^i x+\alpha$ is irreducible iff either $i \in B_{0}$ if $\delta=0$, or $\residuo{i}{\frac{q-1}{2}} \in B_{1}$ if $\delta=1$.

Since any primitive polynomial is irreducible, to determine whether an irreducible polynomial of the form $x^2 +\alpha^i x+\alpha \in \bbbf_{q}[x]$ is primitive, we also require the following set:

$$I_{\delta} := \left\{ \:i \in B_{\delta} \:|\: x^2+\alpha^i x+\alpha \in \bbbf_{q}[x], \mbox{ is primitive}\:\right\} \subseteq B_{\delta} \;.$$

Suppose $I_{\delta}=B_{\delta}$. For this particular case, we ask: Is there a simple way to determine whether an irreducible polynomial of the form $x^2+bx+c \in \bbbf_{q}[x]$ is primitive? That is, simply by looking at the two coefficients $b$ and $c$, is there a simple way to determine whether an irreducible polynomial of the form $x^2+bx+c \in \bbbf_{q}[x]$ is primitive? The answer is yes, and the following result, which is similar to \cite[Theorem 6]{Vega}, shows this. 
 
\begin{proposition}\label{prodos}
Let $c$ be a primitive element in $\bbbf_q$ and let $0 \leq j<q-1$ be the smallest integer such that $c=\alpha^{2j+1}$. Let $i$ be an integer such that $0 \leq i<q-1$ if $q$ is even, and $0 \leq i<\frac{q-1}{2}$ otherwise. Then the two polynomials $f(x)=x^2 \pm \alpha^{i+j}x+\alpha^{2j+1} \in \bbbf_{q}[x]$ are irreducible iff $i\in B_{\delta}$ and primitive iff $i\in I_{\delta}$. Therefore any irreducible polynomial $x^2+bx+c \in \bbbf_{q}[x]$, with $b\neq 0$ and $c$ a primitive element of $\bbbf_q$, is primitive iff $B_{\delta}=I_{\delta}$.
\end{proposition}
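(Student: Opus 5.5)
The plan is to reduce everything to the normalized family $x^2+\alpha^i x+\alpha$ by a scaling substitution. Put $\lambda=\alpha^{j}$ and, for $g(x)=x^2\pm\alpha^i x+\alpha$, consider
$$\lambda^2 g(x/\lambda)=x^2\pm\alpha^{i+j}x+\alpha^{2j+1}=f(x).$$
The map $g(x)\mapsto \lambda^{\deg g} g(x/\lambda)$ is multiplicative and preserves degree. Hence $f$ is irreducible iff $g$ is. The roots of $f$ are $\lambda\beta$, where $\beta$ runs over the roots of $g$.

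\textbf{Step 1 (irreducibility).} When $q$ is even the sign is irrelevant, and irreducibility of $x^2+\alpha^ix+\alpha$ is equivalent to $i\in B_0$, as already noted after the definition of $B_\delta$.

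When $q$ is odd, first observe that $x^2-\alpha^i x+\alpha=x^2+\alpha^{i+\frac{q-1}{2}}x+\alpha$, since $-1=\alpha^{(q-1)/2}$. Both signs therefore correspond to exponents with the same residue $\residuo{i}{\frac{q-1}{2}}=i$. The stated criterion then gives irreducibility iff $i\in B_1$.

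\textbf{Step 2 (primitivity).} Here the scaling is not harmless, because the order of $\lambda\beta$ need not equal the order of $\beta$. I would go through the norm and the quasi-order instead.

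Suppose $g$ is irreducible with root $\beta$. Then $\N(\beta)=\alpha$, so $\beta^{q+1}=\alpha$, and $\N(\lambda\beta)=\alpha^{2j+1}=c$. In both cases $\beta$ and $\lambda\beta$ lie in a cyclotomic class of order $q-1$ whose norm is a primitive element of $\bbbf_q$.

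Both $f$ and $g$ have a primitive constant term and a nonzero middle coefficient. Theorem~\ref{MiCar2E} therefore applies to each: the order equals $(q-1)\qord$, and the polynomial is primitive iff its quasi-order is $q+1$. So it suffices to show $\qord(f)=\qord(g)$.

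By Remark~\ref{remuno}, $x^\rho\equiv a \pmod g$ holds iff $\beta^\rho=a$ and $\beta^{q\rho}=a$, i.e.\ iff $\beta^\rho\in\bbbf_q^*$. Since $(\lambda\beta)^\rho=\lambda^\rho\beta^\rho$ and $\lambda\in\bbbf_q^*$, we get $(\lambda\beta)^\rho\in\bbbf_q$ iff $\beta^\rho\in\bbbf_q$. This uses only that $\qord$ of an irreducible polynomial is the least $\rho$ with $\beta^\rho\in\bbbf_q$. Hence $\qord(f)=\qord(g)$, and $f$ is primitive iff $g$ is primitive iff $i\in I_\delta$.

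For odd $q$ with the minus sign, $g$ is $x^2+\alpha^{i'}x+\alpha$ with $i'=i+\frac{q-1}{2}$. The set $I_1\subseteq B_1$ is indexed by residues, so I must check that primitivity depends only on the residue. This follows from the same quasi-order argument with $\lambda=-1$: the polynomials $g(x)$ and $g(-x)$ have equal quasi-orders.

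\textbf{Step 3 (conclusion).} Every $b\neq0$ can be written as $\pm\alpha^{i+j}$ for a suitable $i$ in the stated range. For odd $q$ this holds because the sign absorbs the factor $\alpha^{(q-1)/2}$. By Steps 1 and 2, every irreducible $x^2+bx+c$ with $b\ne0$ and $c$ primitive is primitive exactly when every $i\in B_\delta$ lies in $I_\delta$. That is the final claim.

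The main obstacle is Step 2, specifically transporting primitivity through the scaling. The quasi-order characterization combined with Theorem~\ref{MiCar2E} is what makes it work. For $q$ even the middle coefficient is automatically nonzero; for $q$ odd, $i$ indexes $\alpha^i\neq0$, so the hypothesis $b\neq0$ of Theorem~\ref{MiCar2E} also holds in that case.
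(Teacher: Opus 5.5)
Your argument is essentially the paper's. You scale by $\lambda=\alpha^j$, observe that irreducibility and the quasi-order are unchanged under $g(x)\mapsto\lambda^2 g(x/\lambda)$, and conclude via the fact that a quadratic with primitive constant term is primitive iff its quasi-order is $q+1$. You prove the quasi-order invariance directly through $\beta^\rho\in\bbbf_q$, where the paper cites Corollary 1 of its reference.

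The one flaw is how you obtain that last fact. Theorem~\ref{MiCar2E} is stated only for $q$ that do \emph{not} satisfy Condition (A). As written, Step 2 therefore says nothing when $q+1\in\{2^t,\pi,2\pi\}$.

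The fix is immediate, in either of two ways:
\begin{itemize}
\item Treat that case separately via Theorem~\ref{teoceroA}: every irreducible $f$ and $g$ in question is then primitive, so the equivalence holds trivially.
\item Or, as the paper does, invoke Lidl--Niederreiter's Theorem 3.18. It characterizes primitivity as ``constant term primitive and quasi-order $q+1$'' unconditionally, and it does not even need $b\neq 0$.
\end{itemize}

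A minor point on attribution: the equivalence $g\mid x^\rho-a \iff \beta^\rho=a$ comes from $g$ being the minimal polynomial of $\beta$ together with $a\in\bbbf_q$. It does not come from Remark~\ref{remuno}.
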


\begin{proof}
By \cite[Proposition 1]{Vega}, we can, without loss of generality, assume that $f(x)=x^2+\alpha^{i+j}x+\alpha^{2j+1}$. Suppose that $f(x)$ is irreducible. We claim that the polynomial $x^2+\alpha^{i}x+\alpha$ is also irreducible. Assume for the sake of contradiction that it is not, and let $r_1,r_2 \in \bbbf_q^*$ be such that $(x-r_1)(x-r_2)=x^2+\alpha^{i}x+\alpha$, where $-r_1-r_2=\alpha^{i}$, and $r_1r_2=\alpha$. Hence, $(x-\alpha^{j}r_1)(x-\alpha^{j}r_2)=x^2+\alpha^{i+j}x+\alpha^{2j+1}$, a contradiction! Consequently, $i \in B_{\delta}$. 

Conversely, if $i \in B_{\delta}$, then $x^2+\alpha^{i}x+\alpha$ is irreducible. Assume that $x^2+\alpha^{i+j}x+\alpha^{2j+1}$ is not irreducible. Let $r_1,r_2 \in \bbbf_q^*$ such that $(x-r_1)(x-r_2)=x^2+\alpha^{i+j}x+\alpha^{2j+1}$. Thus $-r_1-r_2=\alpha^{i+j}$, $r_1r_2=\alpha^{2j+1}$ and, consequently, $(x-r_1/\alpha^{j})(x-r_2/\alpha^{j})=x^2+\alpha^{i}x+\alpha$, a contradiction! 

Suppose that $x^2+\alpha^{i+j}x+\alpha^{2j+1} \in \bbbf_{q}[x]$ is primitive. Then by \cite[Theorem 3.18]{Lidl}, $\qord(x^2+\alpha^{i+j}x+\alpha^{2j+1})=q+1$ and by \cite[Corollary 1]{Vega} (take therein $b=\alpha^i$ and $c=m=\alpha$), we have 

$$q+1=\qord(x^2+\alpha^{i+j}x+\alpha^{2j+1})=\qord(x^2+\alpha^{i}x+\alpha)\;.$$

Since $\alpha$ is a primitive element of $\bbbf_{q}$, it follows again from \cite[Theorem 3.18]{Lidl}, that the polynomial $x^2+\alpha^{i}x+\alpha$ is primitive.  In consequence, $i \in I_{\delta}$.

Conversely, if $i \in I_{\delta}$, then $x^2+\alpha^{i}x+\alpha$ is primitive. Thus, by \cite[Theorem 3.18]{Lidl}, $\qord(x^2+\alpha^{i}x+\alpha)=q+1$ and by \cite[Corollary 1]{Vega} (again take therein $b=\alpha^i$ and $c=m=\alpha$), we have 

$$\qord(x^2+\alpha^{i+j}x+\alpha^{2j+1})=\qord(x^2+\alpha^{i}x+\alpha)=q+1\;.$$

Since $\alpha^{2j+1}=c$ is a primitive element of $\bbbf_{q}$, it follows again from \cite[Theorem 3.18]{Lidl} that the polynomial $x^2+\alpha^{i+j}x+\alpha^{2j+1}$ is primitive.  

Suppose that any irreducible polynomial $x^2+bx+c \in \bbbf_{q}[x]$, with $b\neq 0$ and $c$ a primitive element of $\bbbf_q$, is primitive. Let $i \in B_{\delta}$. Since $\alpha^i \neq 0$ and $\alpha$ is a primitive element of $\bbbf_q$, $x^2+\alpha^i x+\alpha$ is primitive. Thus, $i \in I_{\delta}$, $B_{\delta} \subseteq I_{\delta}$, and $I_{\delta} = B_{\delta}$.

Conversely, suppose that $x^2+bx+c=x^2+bx+\alpha^{2j+1} \in \bbbf_{q}[x]$, is irreducible. Since $b\neq 0$, let $i$ be the smallest integer such that $\alpha^i= \pm b/\alpha^j$. Thus, the two polynomials $x^2 \pm \alpha^{i+j}x+\alpha^{2j+1}=x^2 \pm bx+c$ are irreducible, and $i \in B_{\delta}$. But $B_{\delta}=I_{\delta}$, therefore $i \in I_{\delta}$ and $x^2+bx+c$ is primitive.
\end{proof}

At this point, by relating the previous result to Theorem \ref{teoceroA}, we can now conclude that if the sufficient conditions in such a theorem are satisfied, then $I_{\delta}\;=\;B_{\delta}$. Clearly, our goal now is to prove that the converse is also true. Before doing so, we need to determine the cardinalities of the sets $B_{\delta}$ and $I_{\delta}$:

\begin{proposition}\label{protres}
Assume our current notation. Then

\begin{eqnarray}
|B_{\delta}|&=&\left\{ \begin{array}{cl}
		\;\frac{q}{2} & \mbox{ if $\delta=0$}, \\
\\
		\;\left\lfloor \frac{q+1}{4} \right\rfloor & \mbox{ if $\delta=1$},
			\end{array}
\right . \nonumber \\
|I_{\delta}|&=&\phi(q+1)/2\;, \nonumber
\end{eqnarray}

\noindent
where $\phi$ denotes Euler’s totient function.
\end{proposition}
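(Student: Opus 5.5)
We count both sets through the factorization of $x^{q+1}-\alpha$ given in Proposition \ref{prouno} and Corollary \ref{coruno}. First we claim that a monic quadratic $x^2+bx+\alpha$ is irreducible iff it divides $x^{q+1}-\alpha$. One direction is Proposition \ref{prouno}. For the other, let $x^2+bx+\alpha$ be irreducible with root $\beta\in\bbbf_{q^2}$. Then $\N_{\bbbf_{q^2}/\bbbf_q}(\beta)=\beta^{q+1}=\alpha$, so $\beta\in{\cal C}_1^{(q-1,q^2)}$, and by (\ref{eqcero}) the polynomial divides $x^{q+1}-\alpha$.

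\textbf{The case $\delta=0$.} Here $B_0$ is exactly the set of $i$ with $x^2+\alpha^ix+\alpha$ irreducible. Every irreducible quadratic divisor of $x^{q+1}-\alpha$ has the form $x^2+bx+\alpha$ with $b\neq0$, and $b=\alpha^i$ for a unique $i$. Hence $|B_0|=NI_q(x^{q+1}-\alpha,2)=q/2$ by Corollary \ref{coruno}.

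\textbf{The case $\delta=1$.} The irreducible quadratics are the $x^2+bx+\alpha$ dividing $x^{q+1}-\alpha$, and there are $(q+1)/2$ of them by Corollary \ref{coruno}. Since $-r\in{\cal C}_1^{(q-1,q^2)}$ whenever $r$ is, these come in pairs $x^2\pm bx+\alpha$. If $q\equiv1\pmod 4$, exactly one of them has $b=0$, namely $x^2+\alpha$ (Proposition \ref{prouno}). So the number of irreducible ones with $b\neq0$ is $(q+1)/2$ when $q\equiv3\pmod4$ and $(q-1)/2$ when $q\equiv1\pmod4$. We have $-1=\alpha^{(q-1)/2}$, so $\pm\alpha^i$ corresponds to the residues $i$ and $i+\frac{q-1}{2}$. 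The index $\residuo{i}{\frac{q-1}{2}}$ therefore labels the pair $\{b,-b\}$, and the two members of a pair are simultaneously irreducible. Dividing by two gives $|B_1|=(q+1)/4$ or $(q-1)/4$, which is $\lfloor (q+1)/4\rfloor$ in both cases. One should check this is consistent with the paper's description of $B_1$ as the complement of $R_1$; that description is only used through the irreducibility criterion stated above.

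\textbf{Counting $I_\delta$.} A primitive quadratic with constant term $\alpha$ is the minimal polynomial of a primitive element $\gamma'$ of $\bbbf_{q^2}$ with $\gamma'^{q+1}=\alpha$. There are $\phi(q^2-1)$ primitive elements, and their norms $\gamma'^{q+1}$ are primitive in $\bbbf_q$. The norm map from primitive elements of $\bbbf_{q^2}$ onto primitive elements of $\bbbf_q$ has equal-size fibres: write $\gamma'=\gamma^k$ with $\gcd(k,q^2-1)=1$, and note the fibre over $\alpha$ is $\{k\equiv1 \pmod{q-1}\}$, while multiplying $k$ by units mod $q-1$ lifted to units mod $q^2-1$ permutes the fibres. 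The fibre over $\alpha$ therefore has $\phi(q^2-1)/\phi(q-1)$ elements. Moreover $\phi(q^2-1)/\phi(q-1)=\phi(q+1)\cdot\gcd$-correction, and we expect this to reduce to $\phi(q+1)$. We would verify this directly: count $k=1+(q-1)s$, $0\le s\le q$, coprime to $q+1$. Since $k\equiv 1-2s\pmod{q+1}$, this count equals the number of units in $\bbbz_{q+1}$ of the form $1-2s$, which is $\phi(q+1)$ when $q$ is even ($2$ invertible). When $q$ is odd, $s$ ranges over $q+1$ values and $1-2s$ hits each odd residue twice; all units are odd, but coprimality to $q^2-1$ also requires $k$ odd, which holds since $q-1$ is even. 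This double counting needs care and is the step I expect to be the main obstacle; the target is $\phi(q+1)$ roots in total.

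Dividing by $2$ for conjugate roots gives $\phi(q+1)/2$ primitive polynomials $x^2+bx+\alpha$, none with $b=0$. When $q$ is even this is $|I_0|$. When $q$ is odd the pairs $\pm b$ must be accounted for: both $x^2\pm bx+\alpha$ are primitive simultaneously by Proposition \ref{prodos}, which would give $\phi(q+1)/4$. So the root count must be rechecked to be $\phi(q+1)$ per residue class, and I would resolve the discrepancy by recomputing the fibre size carefully in the odd case.
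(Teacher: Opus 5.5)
Your count of $|B_\delta|$ is correct and is essentially the paper's argument: Proposition \ref{prouno} and Corollary \ref{coruno}, with the pairing $x^2\pm\alpha^i x+\alpha$ when $q$ is odd. For $|I_\delta|$ you take a different route. The paper uses the global count $|I_\delta|\,\phi(q-1)\,2^{\delta}=\phi(q^2-1)/2$ from \cite[Theorem 6]{Vega}. You count the primitive elements of $\bbbf_{q^2}$ of norm $\alpha$ directly, via the fibres of the norm map. That route is legitimate and more self-contained, and your even case is complete. But the odd case is not proved as written. You set the target at $\phi(q+1)$ roots, reach $\phi(q+1)/4$, and leave the discrepancy open. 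The target itself is false, so trying to prove it would fail.

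The error is the expectation $\phi(q^2-1)/\phi(q-1)=\phi(q+1)$. Since $\gcd(q-1,q+1)=2^{\delta}$, we have $\phi(q^2-1)=2^{\delta}\phi(q-1)\phi(q+1)$. So the fibre over $\alpha$ has $2^{\delta}\phi(q+1)$ elements, which is $2\phi(q+1)$ when $q$ is odd. Your own computation already shows this:
\begin{itemize}
\item For $k=1+(q-1)s$ we have $\gcd(k,q-1)=1$ automatically, so $\gcd(k,q^2-1)=1$ is equivalent to $\gcd(k,q+1)=1$.
\item As $s$ runs over the complete residue system $0,\dots,q$ modulo the even number $q+1$, the residue $1-2s$ hits every odd residue exactly twice.
\item Every unit modulo $q+1$ is odd.
\end{itemize}
Hence there are exactly $2\phi(q+1)$ primitive elements of norm $\alpha$; the ``double counting'' is genuine and should not be removed. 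Halving for Frobenius conjugates gives $\phi(q+1)$ primitive polynomials $x^2+bx+\alpha$, all with $b\neq 0$. Halving again for the pairs $\{b,-b\}$ gives $|I_1|=\phi(q+1)/2$. These pairs consist of distinct elements sharing the index $\residuo{i}{\frac{q-1}{2}}$, and they are simultaneously primitive by Proposition \ref{prodos}. With this correction your argument is complete. The factor $2^{\delta}$ in the fibre size plays exactly the role of $2^{\delta}=\gcd(q+1,q-1)$ in the paper's proof.
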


\begin{proof}
Note that $|B_{\delta}|$ is the number of irreducible polynomials of the form $x^2+\alpha^i x+\alpha$, where $0\leq i<q-1$ if $\delta=0$, and $0\leq i<\frac{q-1}{2}$ if $\delta=1$. According to Proposition \ref{prouno}, every irreducible polynomial of degree two dividing $x^{q+1}-\alpha$ has the form $x^2+bx+\alpha$ for some $b\in \bbbf_q$. Furthermore, observe that $x^2+\alpha$ fails to be irreducible when $q$ is even. Thus, by Corollary \ref{coruno}, we have $|B_{0}|=NI_q(x^{q+1}-\alpha,2)=\frac{q}{2}$ if $q$ is even. On the other hand, if $q$ is odd, we know, by Proposition \ref{prouno}, that the polynomial $x^2+\alpha$ is an irreducible divisor of $x^{q+1}-\alpha$ iff $\frac{q+1}{2}$ is odd. In addition, note that for any $i\in B_{1}$, the two polynomials $x^2+\alpha^i x+\alpha$ and $x^2-\alpha^i x+\alpha=x^2+\alpha^{\frac{q-1}{2}+i} x+\alpha$ are irreducible. Therefore, if $q$ is odd, we have $|B_{1}|=\lfloor NI_q(x^{q+1}-\alpha,2)/2 \rfloor =\lfloor \frac{q+1}{4} \rfloor$. 

By \cite[Theorem 6]{Vega}, the total number of primitive polynomials over $\bbbf_q$ of degree two is $|I_{\delta}| |J_{\delta}| 2^{\delta}$, where $|J_{\delta}|$ is the number of primitive elements in $\bbbf_q$. On the other hand, since any primitive element in $\bbbf_{q^2}$ is a root of a primitive polynomial of degree two over $\bbbf_q$, and given that there are $\phi(q^2-1)$ primitive elements in $\bbbf_{q^2}$, the total number of such polynomials is $\phi(q^2-1)/2$. Therefore, the total number of primitive polynomials over $\bbbf_q$ of degree two is equal to $\phi(q^2-1)/2=\phi(q+1)\phi(q-1)\gcd(q+1,q-1)/2$. Thus, the result now follows from the fact that $|J_{\delta}|=\phi(q-1)$ and $2^{\delta}=\gcd(q+1,q-1)$. 
\end{proof}

\begin{remark}
Note that if $q$ is odd ($\delta=1$), then $\left\lfloor \frac{q+1}{4} \right\rfloor=\frac{q-1}{2}-\left\lfloor \frac{q-1}{4} \right\rfloor$. Thus, although a formal proof has not been presented, the result concerning $|B_{1}|$ is given in \cite[Section 5]{Vega}.
\end{remark}

\section{Towards our main result}\label{seccinco}

We have already concluded that if the sufficient conditions in Theorem \ref{teoceroA} hold, then we have $I_{\delta}\;=\;B_{\delta}$ (see Section \ref{seccuatro}). The following result shows that the converse is also true.

\begin{theorem}\label{teouno}
Let $B_{\delta}$ and $I_{\delta}$ be as before. Then $B_{\delta}=I_{\delta}$ iff $q+1$ is either of the form $q+1=\pi$, $q+1=2 \pi$, or $q+1=2^t$, for some prime $\pi>2$ and some positive integer $t$.
\end{theorem}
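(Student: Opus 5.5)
Since $I_{\delta}\subseteq B_{\delta}$ and both sets are finite, $B_{\delta}=I_{\delta}$ holds iff $|B_{\delta}|=|I_{\delta}|$. By Proposition \ref{protres}, this means $\phi(q+1)=q$ when $q$ is even, and $\phi(q+1)=2\lfloor \frac{q+1}{4}\rfloor$ when $q$ is odd. So the proof becomes a purely arithmetic classification of the integers $n=q+1$ satisfying these identities. The field structure is no longer needed, except that $n-1$ is a prime power; this only fixes the parity of $n$.

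For $q$ even, $n=q+1$ is odd, and $\phi(n)=n-1$ holds iff $n$ is prime. Thus $n=\pi$ for a prime $\pi>2$, since $n\geq 3$. Conversely, every such $n$ gives $\phi(n)=q$. For $q$ odd, $n$ is even, and I will split according to $n \bmod 4$.

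\textbf{Case $n\equiv 2\pmod 4$.} Then $2\lfloor n/4\rfloor=(n-2)/2$. Write $n=2m$ with $m$ odd, so $\phi(n)=\phi(m)$. The condition becomes $\phi(m)=m-1$, i.e.\ $m$ is prime (or $m=1$, which is excluded since $q\geq 2$). This gives $n=2\pi$ with $\pi>2$ prime.

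\textbf{Case $n\equiv 0\pmod 4$.} Then $2\lfloor n/4\rfloor=n/2$. Write $n=2^t m$ with $t\ge 2$ and $m$ odd, so $\phi(n)=2^{t-1}\phi(m)$. The condition $2^{t-1}\phi(m)=2^{t-1}m$ forces $\phi(m)=m$, hence $m=1$ and $n=2^t$. The converse directions are the same computations read backwards, so each of the three forms yields equality of cardinalities and hence $B_{\delta}=I_{\delta}$. The forms $q+1=2$ and $q+1=4$ are covered by $2^t$ for $t=1,2$.

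The only delicate step is the floor bookkeeping in $|B_1|$ across the two residue classes of $q+1$ modulo $4$. I will also double-check that Proposition \ref{protres} is being applied with the same index ranges defining $B_{\delta}$ and $I_{\delta}$, so that the inclusion $I_\delta\subseteq B_\delta$ really turns equality of cardinalities into equality of sets. Beyond that, the argument is elementary properties of Euler's $\phi$: $\phi(n)\le n-1$ with equality iff $n$ is prime (for $n>1$), and $\phi(m)=m$ iff $m=1$.
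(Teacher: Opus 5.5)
Your proposal is correct and takes the same approach as the paper's proof: use $I_\delta\subseteq B_\delta$ to reduce to $|B_\delta|=|I_\delta|$, substitute the cardinalities from Proposition~\ref{protres}, and split into the cases $q+1$ odd, $q+1=2d$ with $d$ odd, and $q+1=2^t d$ with $t\ge 2$ and $d$ odd, finishing with the elementary properties of $\phi$. One minor point: your parenthetical about $m=1$ is unnecessary, since $\phi(1)=1\neq 0=m-1$, so $m=1$ never satisfies $\phi(m)=m-1$.
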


\begin{proof}
Since $B_{\delta} \supseteq I_{\delta}$, $|B_{\delta}| \geq |I_{\delta}|$. Suppose that $q+1=d$, where $d$ is an odd integer. Let $\phi$ be as in Proposition \ref{protres}. Thus $q$ is even and, by Proposition \ref{protres}, $|B_{\delta}|=\frac{q}{2}=\frac{d-1}{2} \geq |I_{\delta}|=\phi(q+1)/2=\phi(d)/2$, and clearly $|B_{\delta}| = |I_{\delta}|$ iff $\frac{d-1}{2}=\phi(d)/2$ iff $d=\pi$ for some prime $\pi>2$. Now, assume that $q+1=2d$, with $d$ is an odd integer. Thus $q$ is odd and, by Proposition \ref{protres}, $|B_{\delta}|=\lfloor \frac{q+1}{4} \rfloor=\frac{q-1}{4}=\frac{d-1}{2} \geq |I_{\delta}|=\phi(q+1)/2=\phi(d)/2$, and again $|B_{\delta}| = |I_{\delta}|$ iff $\frac{d-1}{2}=\phi(d)/2$ iff $d=\pi$ for some prime $\pi>2$. Finally, for some positive integer $t>1$, suppose that $q+1=2^t d$, with $d$ is an odd integer. Thus $q$ is odd and, by Proposition \ref{protres}, $|B_{\delta}|=\lfloor \frac{q+1}{4} \rfloor=\frac{q+1}{4}=2^{t-2} d \geq |I_{\delta}|=\phi(q+1)/2=2^{t-2}\phi(d)$, and clearly $|B_{\delta}| = |I_{\delta}|$ iff $2^{t-2} d=2^{t-2}\phi(d)$ iff $d=1$. Since $B_{\delta} \supseteq I_{\delta}$, the result now follows from the fact that $|B_{\delta}|=|I_{\delta}|$ iff $B_{\delta}=I_{\delta}$. 
\end{proof}

In Theorem \ref{teoceroA} it is shown that any irreducible polynomial $f(x)=x^2+bx+c \in \bbbf_{q}[x]$, with $b\neq 0$ and $c$ a primitive element of $\bbbf_q$, is a primitive polynomial over $\bbbf_q$ if $q+1$ is either of the form $q+1=2^t$, $q+1=\pi$, or $q+1=2 \pi$, for some positive integer $t$ and some prime $\pi>2$. Now it is time to prove that the sufficient conditions in Theorem \ref{teoceroA} are also necessary. 

\begin{theorem}\label{MiCar1}
Let $q$ be a power of a prime number. Then, $q+1$ is either of the form $q+1=2^t$, $q+1=\pi$, or $q+1=2 \pi$, for some positive integer $t$ and some prime $\pi>2$ iff any irreducible polynomial of the form $f(x)=x^2+bx+c \in \bbbf_{q}[x]$, with $b\neq 0$ and $c$ a primitive element of $\bbbf_q$, is primitive.
\end{theorem}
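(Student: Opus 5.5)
The theorem follows by chaining Proposition \ref{prodos} with Theorem \ref{teouno}. No new computation is needed; we only have to check that the two equivalences refer to the same property.

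For the forward direction ($\Rightarrow$) there are two routes. The first is to quote Theorem \ref{teoceroA} directly. The second is self-contained within this paper: assume $q+1$ equals $2^t$, $\pi$, or $2\pi$. Theorem \ref{teouno} then gives $B_{\delta}=I_{\delta}$, and the final assertion of Proposition \ref{prodos} gives the conclusion: every irreducible $x^2+bx+c$ with $b\neq 0$ and $c$ primitive is primitive.

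For the converse ($\Leftarrow$), assume every irreducible polynomial $x^2+bx+c\in\bbbf_q[x]$ with $b\neq0$ and $c$ primitive is primitive. Proposition \ref{prodos} (the direction proved in its second-to-last paragraph) then yields $B_{\delta}=I_{\delta}$. Concretely, for each $i\in B_{\delta}$ the polynomial $x^2+\alpha^i x+\alpha$ is irreducible, has nonzero linear coefficient, and has primitive constant term $\alpha$, so it is primitive and $i\in I_{\delta}$. Theorem \ref{teouno} then forces $q+1\in\{2^t,\pi,2\pi\}$.

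The only point needing care is that the $\Leftarrow$ direction of Theorem \ref{teouno} rests on the cardinality count of Proposition \ref{protres}. That count uses $|I_{\delta}|=\phi(q+1)/2$, obtained from $\phi(q^2-1)/2=\phi(q+1)\phi(q-1)\gcd(q+1,q-1)/2$, and $|B_{\delta}|$ from Corollary \ref{coruno}. Since those results are already established earlier in the paper, I expect no real obstacle. The step where I would be most careful is the identity $\phi(q^2-1)=\phi(q+1)\phi(q-1)\gcd(q+1,q-1)$. In the odd case $\gcd(q+1,q-1)=2$, and this identity holds because exactly one of $q\pm1$ is divisible by $4$, which is precisely the situation in which $\phi(mn)=\phi(m)\phi(n)\gcd(m,n)/\phi(\gcd(m,n))$ reduces to the stated formula. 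Granting that, the theorem is the composition of the two equivalences.
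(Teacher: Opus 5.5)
Your proposal is correct and is essentially the paper's proof. The paper likewise cites Theorem \ref{teoceroA} for the forward direction, then composes two equivalences: Proposition \ref{prodos} (every irreducible $x^2+bx+c$ with $b\neq 0$ and $c$ primitive is primitive iff $B_{\delta}=I_{\delta}$) and Theorem \ref{teouno} ($B_{\delta}=I_{\delta}$ iff $q+1\in\{2^t,\pi,2\pi\}$). Your check of $\phi(q^2-1)=\phi(q+1)\phi(q-1)\gcd(q+1,q-1)$ is fine, though all it needs is $\phi(\gcd(q+1,q-1))=1$, which holds because that gcd is $1$ or $2$.
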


\begin{proof}
By Theorem \ref{teoceroA}, we know that if $q+1$ is either of the form $q+1=2^t$, $q+1=\pi$, or $q+1=2 \pi$, for some positive integer $t$ and some prime $\pi>2$, then any irreducible polynomial of the form $f(x)=x^2+bx+c \in \bbbf_{q}[x]$, with $b\neq 0$ and $c$ a primitive element of $\bbbf_q$, is a primitive polynomial. On the other hand, by Proposition \ref{prodos} any irreducible polynomial $x^2+bx+c \in \bbbf_{q}[x]$, with $b\neq 0$ and $c$ a primitive element of $\bbbf_q$, is primitive iff $B_{\delta}=I_{\delta}$. Finally, by Theorem \ref{teouno}, $B_{\delta}=I_{\delta}$ iff $q+1$ is either of the form $q+1=2^t$, $q+1=\pi$, or $q+1=2 \pi$, for some positive integer $t$ and some prime $\pi>2$.
\end{proof}

\section{A new family of finite fields for which it is easy to determine when an irreducible polynomial is primitive}\label{secseis}

Suppose that $q$, the order of the finite field $\bbbf_q$, satisfies $q+1=4 \pi$, for some prime $\pi>2$. Then, in this case and using the following result, it is also easy to determine when an irreducible polynomial of the form $f(x)=x^2+bx+c \in \bbbf_{q}[x]$, where $c$ is a primitive element of $\bbbf_q$, is primitive.

\begin{theorem}\label{teodiez}
Let $q$ be a power of a prime number. If $q+1=4 \pi$, for some prime $\pi>2$, then any irreducible polynomial of the form $f(x)=x^2+bx+c \in \bbbf_{q}[x]$, where $c$ a primitive element of $\bbbf_q$, is a primitive polynomial iff $b^2 \neq 2c$. In addition, if $b^2=2c$ then $f(x)$ is an irreducible non-primitive polynomial such that \qord$(f(x))=4$ and \ord$(f(x))=4(q-1)$.
\end{theorem}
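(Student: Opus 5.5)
Let $f(x)=x^2+bx+c$ be irreducible with $c$ primitive in $\bbbf_q$, where $q+1=4\pi$. Then $q$ is odd and $q\equiv 3 \pmod 4$. Let $\beta$ be a root of $f$ in $\bbbf_{q^2}$. Then $\N_{\bbbf_{q^2}/\bbbf_q}(\beta)=\beta^{q+1}=c$, so $\beta\in{\cal C}_i^{(q-1,q^2)}$ with $c=\alpha^i$ and $i$ odd. Write $\beta=\gamma^{i+j(q-1)}$.

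\emph{Step 1: reduce to quasi-orders.} By \cite[Theorem 3.18]{Lidl}, $f$ is primitive iff $\qord(f)=q+1$. By Proposition \ref{prouno} and Remark \ref{remuno}, $\rho:=\qord(f)$ divides $q+1=4\pi$. Also $\rho$ is the order of the image of $\beta$ in $\bbbf_{q^2}^*/\bbbf_q^*$, a cyclic group of order $q+1$.

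\emph{Step 2: show $\rho\in\{4,q+1\}$.} If $\rho$ is a proper divisor of $q+1$, then $\rho$ divides $q+1$ and also $(q+1)/2=2\pi$ or $(q+1)/\pi=4$.
\begin{itemize}
\item Suppose $\rho\mid 2\pi$. Then $\beta^{2\pi}=a\in\bbbf_q^*$, so $c=\beta^{4\pi}=a^2$ is a square. This contradicts $c$ being primitive, since $q$ is odd.
\item Otherwise $\rho\mid 4$ and $\rho\nmid 2$, so $\rho=4$.
\end{itemize}
So the only non-primitive case is $\qord(f)=4$. Theorem \ref{MiCar2E} then gives $\ord(f)=4(q-1)$, the second claim of the statement. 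The same conclusion also follows from Theorem \ref{teocero} together with Proposition \ref{procero}.

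\emph{Step 3: characterize $\qord(f)=4$ via $b^2=2c$.} I will use the division scheme in the proof of Proposition \ref{prodosA}. Set $g_1=1$, $g_2=-b$, $g_3=-(g_2b+g_1c)=b^2-c$, and $g_4=-(g_3b+g_2c)=-b(b^2-c)+bc=-b(b^2-2c)$. Note $g_2=0$ would give $\rho=2$, which Step 2 excludes, so $b\neq0$. Likewise $g_3\neq0$, since otherwise $\rho=3\nmid 4\pi$. Hence $\rho=4$ iff $g_4=0$ iff $b^2=2c$, using $b\neq 0$.

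Combining the steps: if $b^2\neq 2c$ then $\rho\neq 4$, so $\rho=q+1$ and $f$ is primitive. If $b^2=2c$ then $\rho=4<q+1$, so $f$ is not primitive.

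\emph{Remaining check.} The hypothesis ``irreducible with $b^2=2c$'' can hold. Since $b^2-4c=-2c$, irreducibility is automatic when $2c$ is a square: $-1$ is a nonsquare (as $q\equiv 3 \pmod 4$), so $-2c$ is then a nonsquare. Existence is not logically needed for the iff, though.

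\emph{Main obstacle.} The key step is Step 2, which rules out quasi-orders dividing $2\pi$ using the primitivity (nonsquareness) of $c$. The rest is the explicit computation of $g_4$.
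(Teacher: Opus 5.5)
Your proof is correct, and it reaches the primitivity criterion by a different key step than the paper. The paper does not derive the dichotomy $\qord(f)\in\{4,q+1\}$ itself. Instead it invokes Theorem~\ref{MiCar2} (Condition (B), imported from \cite{Vega}) with $p=\pi$ and $\frac{q+1}{p}=4$, so that primitivity becomes equivalent to $h_1,\dots,h_4\neq 0$. It then computes $h_4=-b(b^2-2c)$, getting $b\neq 0$ from $-c$ being a square and $h_3\neq 0$ from $c$ being a nonsquare.

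You prove the dichotomy directly. $\qord(f)$ is the order of $\beta\bbbf_q^*$ in the cyclic group $\bbbf_{q^2}^*/\bbbf_q^*$ of order $4\pi$. The nonsquareness of $c$ rules out every divisor of $2\pi$, which leaves only $4$. This makes your argument self-contained: it needs only \cite[Theorem 3.18]{Lidl} and the recursion of Proposition~\ref{prodosA}. It also exposes the mechanism behind Condition (B), namely that the full $2$-part of $q+1$ must divide the quasi-order. The paper's version is shorter once Theorem~\ref{MiCar2} is granted, and it fits that general framework.

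The order computation $\ord(f)=4(q-1)$ is the same in both, via Theorem~\ref{teocero} and Proposition~\ref{procero}. You should state explicitly the step $c=a^{\pi}$, which forces $a$ to be primitive. Your discriminant check that $b^2=2c$ automatically gives irreducibility is a small bonus that the paper's proof does not make.

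One slip needs fixing. In Step 3 you justify $g_3\neq 0$ by the claim $3\nmid 4\pi$. That claim is false for $\pi=3$, i.e.\ $q=11$, the first field in Table I. The conclusion still holds. Step 2 already gives $\rho\in\{4,q+1\}$, which excludes $\rho=3$ for every $\pi$; alternatively, as in the paper, $g_3=b^2-c\neq 0$ because $c$ is a nonsquare.

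Also, if you cite Theorem~\ref{MiCar2E} in Step 2, note that it assumes $b\neq 0$, which you only establish in Step 3. This is not circular, since Step 3's derivation of $b\neq 0$ uses only the dichotomy. Either reorder the steps or rely on the Theorem~\ref{teocero}/Proposition~\ref{procero} route, which needs no assumption on $b$.
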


\begin{proof}
Clearly $q$ is odd. Since $q+1=4 \pi$ and $c$ is a primitive element of $\bbbf_q$, $-c$ is a square of an element of $\bbbf_q^*$. Hence $x^2+c$ is reducible over $\bbbf_q$ and $b \neq 0$. 

Let $h(x), r(x) \in \bbbf_q [x]$ be the uniquely polynomials such that

$$h(x)=\frac{x^{5}-r(x)}{f(x)}=h_{1}x^{3}+h_{2}x^{2}+h_{3}x+h_{4}\;,$$

\noindent
and $\deg(r(x))<2$. Suppose that $f(x)$ is a primitive polynomial. Then by Condition {\rm (B)} of Theorem \ref{MiCar2}, $p=\pi$, $\frac{q+1}{p}=4$, and the four terms $h_i$, $i=1,2,3,4$, are non-zero. Following the argument in the proof of Proposition \ref{prodosA}, it follows that $h_1=1$, $h_2=-b$, $h_3=-(h_2b+h_1c)=b^2-c$ and $h_4=-(h_3b+h_2c)=-b(b^2-2c) \neq 0$. Therefore $b^2 \neq 2c$. Conversely, suppose now that $b^2 \neq 2c$. Since $h_2=-b\neq 0$ and $c$ is a primitive element of $\bbbf_q$, we have $h_3=b^2-c\neq 0$. Thus the four terms $h_i$, $i=1,2,3,4$, are non-zero. According to Condition {\rm (B)} of Theorem \ref{MiCar2}, $f(x)$ is therefore a primitive polynomial.

Let $a \in \bbbf_q^*$ and $\rho, r \in \bbbz^+$ be the uniquely determined values such that $x^{\rho} \equiv a \pmod{f(x)}$, $\rho=\qord(f(x))$, and $r=\ord(f(x))$. By \cite[Proposition 1]{Vega}, $f(x) | (x^{q+1}-c)$ and therefore, owing to Remark \ref{remuno}, we have $\rho|(q+1)$ and $c=a^{(q+1)/\rho}$. 

Let $g(x)$ be as in Proposition \ref{prodosA}. If $b^2=2c$ then, again by that proposition and its proof, $g_1=1$, $g_2=-b\neq 0$, $g_3=b^2-c\neq 0$, $g_4=-b(b^2-2c)=0$ and $\rho=4$. On the other hand, owing to Theorem \ref{teocero}, $r=\gcd(r,q-1)\rho$. Suppose, for the sake of contradiction, that $r=\ord(f(x))=\gcd(r,q-1)4 \neq 4(q-1)$. Thus $\gcd(r,q-1)<q-1$ and, by Proposition \ref{procero}, $a$ is not a primitive element in $\bbbf_q$. But this is a contradiction because $c$ is a primitive element of $\bbbf_q$ and $c=a^{(q+1)/\rho}=a^{\pi}$. 
\end{proof}


\begin{center}
Table I \\
{\em The first five finite fields $\bbbf_q$ that satisfy the condition in Theorem \ref{teodiez}.}
\end{center}
\begin{center}
\begin{tabular}{|ccccc|} \hline
{\bf $\bbbf_{11}$} & {\bf $\bbbf_{19}$} & {\bf $\bbbf_{27}$} & {\bf $\bbbf_{43}$} & {\bf $\bbbf_{67}$} \\ \hline
\end{tabular}
\end{center}
 
The first five finite fields $\bbbf_q$ that satisfy the condition in Theorem \ref{teodiez} are shown in Table I.

\begin{example}
Let $(q,c)=(11,2)$ and note that $q+1=12=4(3)$ and $\langle 2 \rangle=\bbbf_{11}^*$. It is not difficult to see that the polynomial $x^{12}-2 \in \bbbf_{11}[x]$ is factored as the product of six irreducible polynomials of degree two. In fact,

$$x^{12}-2=(x^2 \pm 2x+2)(x^2 \pm 4x+2)(x^2 \pm 5x+2)\;.$$

\noindent
Thus, by Theorem \ref{teoonce}, the four polynomials $(x^2 \pm 4x+2)$ and $(x^2 \pm 5x+2)$ are primitive, while the two polynomials $(x^2 \pm 2x+2)$ are non-primitive. For the latter two, their quasi-order and order are $4$ and $40$, respectively.
\end{example}

If $q+1=4 \pi$ for some prime $\pi>2$, then, using the following result, we can identify and obtain all non-primitive irreducible polynomials $f(x)=x^2+bx+c$, with $b\neq 0$ and $c$ a primitive element of $\bbbf_q$.

\begin{theorem}\label{teoonce}
Suppose that $q+1=4 \pi$ for some prime $\pi>2$. Then, for any primitive element $c$ in $\bbbf_q$, $2c$ is a square of an element of $\bbbf_q^*$ and the two polynomials $f(x)=x^2 \pm (2c)^{1/2}x+c$ are irreducible and non-primitive, where \qord$(f(x))=4$ and \ord$(f(x))=4(q-1)$.
\end{theorem}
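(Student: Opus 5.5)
Since $q+1=4\pi$, we have $q\equiv 3 \pmod 4$, so $q$ is odd, $-1$ is a non-square in $\bbbf_q$, and $q \equiv 3 \pmod 8$ or $q\equiv 7\pmod 8$. The plan is first to show that $2$ is a non-square. Then, because $c$ is a primitive element and hence a non-square, $2c$ will be a square. After that, irreducibility will follow from a discriminant computation, and the quasi-order and order will follow directly from Theorem \ref{teodiez}.

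\emph{Step 1: $2$ is a non-square.} Write $q=p^m$ with $p$ prime. Since $q\equiv 3\pmod 4$, $m$ is odd and $p\equiv 3 \pmod 4$. Because $m$ is odd, $2$ is a square in $\bbbf_q$ iff it is a square in $\bbbf_p$; an element of $\bbbf_p$ that is a non-square stays a non-square in an odd-degree extension. By the second supplementary law, $2$ is a square mod $p$ iff $p\equiv \pm 1\pmod 8$. Since $p\equiv 3\pmod 4$, this means $2$ is a square iff $p\equiv 7\pmod 8$, iff $q\equiv 7 \pmod 8$ (again using that $m$ is odd). Now $q+1=4\pi$ with $\pi$ odd gives $q+1\equiv 4\pmod 8$, so $q\equiv 3\pmod 8$. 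Hence $2$ is a non-square in $\bbbf_q$. This is the only genuinely new arithmetic input, and the step I expect to need the most care, in particular the reduction from $q$ to $p$. A cleaner alternative I would try first is to note that $2$ is a square in $\bbbf_q$ iff $q\equiv\pm1\pmod 8$ for any odd $q$. This holds because $2=(\zeta+\zeta^{-1})^2$ with $\zeta$ a primitive $8$th root of unity, and $\zeta+\zeta^{-1}\in\bbbf_q$ iff the Frobenius $\zeta\mapsto\zeta^q$ fixes it, iff $q\equiv\pm1 \pmod 8$. This avoids reducing to the prime field entirely.

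\emph{Step 2: $2c$ is a square, and the polynomials are irreducible.} Since $c$ is primitive and $q$ is odd, $c$ is a non-square. A product of two non-squares is a square, so $2c=s^2$ for some $s\in\bbbf_q^*$; the square roots are $\pm s$. Put $f(x)=x^2\pm sx+c$. Its discriminant is $s^2-4c=2c-4c=-2c$. Here $-1$ is a non-square and $2c$ is a square, so $-2c$ is a non-square. Since $q$ is odd, $f(x)$ is therefore irreducible over $\bbbf_q$, for either sign.

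\emph{Step 3: apply Theorem \ref{teodiez}.} The polynomials $f(x)=x^2\pm(2c)^{1/2}x+c$ are irreducible, $c$ is primitive, and $b^2=2c$. Theorem \ref{teodiez} then says directly that $f(x)$ is non-primitive with $\qord(f(x))=4$ and $\ord(f(x))=4(q-1)$, which completes the proof.
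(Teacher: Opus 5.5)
Your proof is correct, but it takes a different route from the paper.

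\textbf{The paper's route.} It uses no quadratic-residue theory. It writes $c=\alpha^{2j+1}$ and uses Proposition~\ref{prodos} to parametrize the irreducible polynomials $x^2\pm\alpha^{i+j}x+c$ by $i\in B_1$. From Proposition~\ref{protres} it takes $|B_1|=\pi$ and $|I_1|=\phi(4\pi)/2=\pi-1$, so there is exactly one $i'\in B_1\setminus I_1$. Theorem~\ref{teodiez} then forces $(\pm\alpha^{i'+j})^2=2c$. Thus the existence of $(2c)^{1/2}$ and the irreducibility of $x^2\pm(2c)^{1/2}x+c$ both come out of the counting. In effect this is a counting proof that $2$ is a non-square in $\mathbb{F}_q$ whenever $q+1=4\pi$.

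\textbf{Your route.} You get these facts directly. From $q\equiv 3 \pmod 8$, both $2$ and $-1$ are non-squares, and primitivity makes $c$ a non-square. Hence $2c$ is a square and the discriminant $-2c$ is a non-square. You delegate only non-primitivity, the quasi-order and the order to Theorem~\ref{teodiez}.

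\textbf{Comparison.} Your argument is more self-contained and explains why $2c$ is a square. It does not need Proposition~\ref{prodos} or the cardinality formula for $I_\delta$, which rests on an external counting theorem from the author's earlier paper. The paper's argument instead reuses the machinery of Section~\ref{seccuatro}.

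\textbf{One small gap.} Of your two justifications in Step~1, the one via a primitive $8$th root of unity $\zeta$ is the cleaner. For $q\equiv\pm 3\pmod 8$ you should state why Frobenius does not fix $\zeta+\zeta^{-1}$: one has $\zeta^{3}+\zeta^{-3}=-(\zeta+\zeta^{-1})$, and this differs from $\zeta+\zeta^{-1}$ because $(\zeta+\zeta^{-1})^2=2\neq 0$ in odd characteristic.
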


\begin{proof}
Since $q$ is odd, $\delta=1$. Let $c$ be a primitive element in $\bbbf_q$ and let $0 \leq j<q-1$ be the smallest integer such that $c=\alpha^{2j+1}$. Let $i$ be an integer such that $0 \leq i<\frac{q-1}{2}$. Then, by Proposition \ref{prodos}, the two polynomials $x^2 \pm \alpha^{i+j}x+c \in \bbbf_q[x]$ are irreducible iff $i\in B_{1}$ and primitive iff $i\in I_{1}$.

On the other hand, by Proposition \ref{protres}, $|B_{1}|=\lfloor \frac{q+1}{4} \rfloor=\frac{q+1}{4}=\pi$ and $|I_{1}|=\phi(q+1)/2=\phi(\pi)=\pi-1$. Thus, it is clear that $|B_{1}|=|I_{1}|+1$. Since $B_{1} \supseteq I_{1}$, this in turn means that there exists a unique $i' \in B_{1}$ and $i' \notin I_{1}$ such that the two polynomials $f(x)=x^2 \pm \alpha^{i'+j}x+c \in \bbbf_q[x]$ are irreducible and non-primitive. However, by Theorem \ref{teodiez}, any irreducible polynomial of the form $x^2+bx+c$ is non-primitive iff $b^2=2c$. Therefore $(\pm \alpha^{i'+j})^2=2c$. The last part follows directly from Theorem \ref{teodiez}.
\end{proof}

\begin{example}
Let $q=11$ again and note that $\langle 2 \rangle=\langle 6 \rangle=\langle 7 \rangle=\langle 8 \rangle=\bbbf_{11}^*$. Thus, as asserted in Theorem \ref{teoonce}, we clearly have $2=(2(2))^{1/2}$, $1=(2(6))^{1/2}$, $5=(2(7))^{1/2}$, and $4=(2(8))^{1/2}$. Therefore the eight polynomials $(x^2 \pm 2x+2)$, $(x^2 \pm x+6)$, $(x^2 \pm 5x+7)$, and $(x^2 \pm 4x+8)$ are irreducible non-primitive polynomials, whose quasi-order and order are $4$ and $40$, respectively. In fact, since $\phi(q-1)=\phi(10)=4$, note that beyond these eight, there is no irreducible polynomial of the form $x^2+bx+c$, where $c$ is a primitive element of $\mathbb{F}_q$, with quasi-order $4$ and order $40$.
\end{example}

\section{Conclusion}\label{secsiete}

It was shown in \cite{Vega} that there exist at least three infinite families of finite fields for which, given an irreducible polynomial of the form $f(x)=x^2+bx+c$, it is very easy to determine whether it is primitive simply by looking at its two coefficients $b$ and $c$. The main objective of this work was to show that, beyond these three infinite families of finite fields, there is no other finite field $\bbbf_q$ such that, given any irreducible polynomial of the form $f(x)=x^2+bx+c$, it will be primitive if and only if $b\neq 0$ and $c$ is a primitive element of that finite field. That is, the main objective of this work was to formally prove in Theorem \ref{MiCar1} that the sufficient conditions in Theorem \ref{teoceroA} are also necessary.

Let $b,c \in \bbbf_q$, such that $b\neq 0$ and $c$ is a primitive element of $\bbbf_q$. As another result, which can be considered as an extension of Theorem \ref{MiCar2}, we determine the order and the quasi-order of any irreducible polynomial of the form $f(x)=x^2+bx+c \in \bbbf_{q}[x]$. That is, if the sufficient conditions in Theorem \ref{teoceroA} do not hold, Theorem \ref{MiCar2E} shows that the quasi-order and the order of any irreducible polynomial of the form $f(x)=x^2+bx+c \in \mathbb{F}_{q}[x]$, with $b\neq 0$ and $c$ a primitive element of $\mathbb{F}_q$, can be determined simply by examining the $\frac{q+1}{p}$ terms of the polynomial $h(x)$.

As a final result, another infinite family of finite fields $\bbbf_q$ is presented for which, using a different test on the two coefficients $b$ and $c$, it is also easy to determine when a monic irreducible polynomial of degree two is primitive. More specifically, Theorem \ref{teodiez} proves that for any finite field $\bbbf_q$ satisfying $q+1=4 \pi$, for some prime $\pi>2$, every irreducible polynomial of the form $f(x)=x^2+bx+c \in \bbbf_{q}[x]$, where $c$ is a primitive element of $\bbbf_q$, is primitive if and only if $b^2 \neq 2c$. Finally, for finite fields of this kind and any $b,c \in \bbbf_q$ with $b\neq 0$ and $c$ a primitive element of $\bbbf_q$, we identify, in Theorem \ref{teoonce}, all the irreducible non-primitive polynomials of the form $f(x)=x^2+bx+c$ and determine their quasi-order and order.

As future work, it would be interesting to determine whether the sufficient condition in Theorem \ref{teodiez} is also necessary. In addition, it is worth exploring other infinite families of finite fields $\mathbb{F}_q$ for which it is possible to determine when a monic irreducible polynomial of degree two is primitive, using different tests involving only the two coefficients $b$ and $c$. Finally, we believe that using the terms of a polynomial $h(x)$, similar to the one in Theorem \ref{MiCar2E}, one can determine the quasi-order and order of any irreducible polynomial of degree two over any finite field. It would be interesting to prove this.


\end{document}